%
%
%
%
%
%

%
%
%
%

\documentclass[11pt]{amsart}
\usepackage{amssymb,amsmath}
\usepackage{tikz}

\setlength{\oddsidemargin}{ -0.5cm}
\setlength{\evensidemargin}{ -0.5cm}
\textwidth 17cm

\newtheorem{theorem}{Theorem}[section]
\newtheorem{proposition}[theorem]{Proposition}
\newtheorem{lemma}[theorem]{Lemma}

\newtheorem{corollary}[theorem]{Corollary}

\newtheorem{conjecture}[theorem]{Conjecture}
\numberwithin{equation}{section}

\newcommand{\av}{{\mathbf a}}
\newcommand{\bv}{{\mathbf b}}
\newcommand{\cv}{{\mathbf c}}
\newcommand{\dv}{{\mathbf d}}
\newcommand{\tv}{{\mathbf t}}
\newcommand{\ab}{\av\bv}
\newcommand{\abt}{\av\bv\tv}
\newcommand{\cd}{\cv\dv}

\newcommand{\zabs}{\Zzz\langle\langle\av,\bv\rangle\rangle}
\newcommand{\zcds}{\Zzz\langle\langle\cv,\dv\rangle\rangle}
\newcommand{\zabts}{\Zzz\langle\langle\av,\bv,\tv\rangle\rangle}
\newcommand{\zcdts}{\Zzz\langle\langle\cv,\dv,\tv\rangle\rangle}
\newcommand{\hz}{\widehat{0}}
\newcommand{\ho}{\widehat{1}}
\newcommand{\antiflip}{\rotatebox[origin=c]{90}{\small $\forall$}}

\newcommand{\Qqq}{{\mathbb Q}}

\newcommand{\Zzz}{{\mathbb Z}}

\DeclareMathOperator{\Bin}{Bin}
\newcommand{\VerticalLine}{\hspace*{-\arraycolsep}\vline\hspace*{-\arraycolsep}}
\newcommand{\BMTwoByTwo}[4]
{\left(\begin{array}{c c c}
#1 & \VerticalLine & #2 \\ \hline
#3 & \VerticalLine & #4
\end{array}\right)}

\parskip=12pt

\begin{document}

\vspace*{-20 mm}

\title{Two classes of level Eulerian posets}
\author{Richard Ehrenborg}

\address{Department of Mathematics, University of Kentucky,
Lexington, KY 40506-0027.
\hfill\break \tt https://www.math.uky.edu/\~{}jrge/,
richard.ehrenborg@uky.edu.}

\subjclass[2000]
{Primary
06A07, 
Secondary
05A15, 
52B22, 
57M15. 
}

\keywords{Level Eulerian posets; Non-commutative rational series}

\begin{abstract}
We present two classes of level Eulerian posets.
Both classes contain intervals of rank $k+1$ whose $\cd$-index
is the sum over all $\cd$-monomials $w$ of degree $k$
and the coefficient of the monomial~$w$ is $r$ to the power of the number of $\dv$'s in~$w$.
We also show that the order complexes of every interval in the first
class are homeomorphic to spheres.
\end{abstract}

\maketitle

\section{Introduction}

Level posets were introduced in~\cite{Ehrenborg_Hetyei_Readdy}
as posets where the cover relations between
two adjacent ranks are the same independent of the rank.
The cover relations are naturally encoded by a $0,1$-matrix~$M = (m_{i,j})$,
that is, the element $(i,s)$ is covered by $(j,s+1)$ if and only if $m_{i,j} = 1$.
Such posets are naturally infinite and ranked.
See Figure~\ref{figure_M_0_1_2} for three examples.

More interestingly, the paper
constructs such a poset which is Eulerian;
see~\cite[Example~4.7]{Ehrenborg_Hetyei_Readdy}.
This level poset contains intervals whose $\cd$-index
have all the $\cd$-coefficients equal to~$1$;
see~\cite[Example~8.2]{Ehrenborg_Hetyei_Readdy}.
This example is the second poset displayed in
Figure~\ref{figure_M_0_1_2}.

Recall that the $\ab$-index of a poset $P$ is
an encoding of the flag $f$-vector of the poset as a polynomial $\Psi(P)$
in the non-commutative variables $\av$ and $\bv$.
Furthermore, when the poset is Eulerian
the $\ab$-index can be expressed in terms of
$\cv = \av+\bv$ and $\dv = \av\bv+\bv\av$;
see~\cite{Bayer_Klapper,Stanley}.
For a level posets a more theoretical result is proven in~\cite{Ehrenborg_Hetyei_Readdy},
namely, that the sum
\begin{align}
\Psi_{i,j} = \sum_{k \geq 1} \Psi([(i,0), (j,k)])
\label{equation_Psi_series}
\end{align}
is a non-commutative rational series in the variables $\av$ and $\bv$.
Furthermore, for level Eulerian posets, this series
is a non-commutative rational series in the variables $\cv$ and $\dv$.

In this paper we present two new classes of level Eulerian posets.
Furthermore, we calculate their $\cd$-series.
Interestingly, both classes contain intervals
where the $\cd$-coefficients are $r$ to the power of the number
of $\dv$'s in the associated $\cd$-monomial.
This phenomenon is reflected in the fact that the denominator of the
rational $\cd$-index series is the polynomial $1 - \cv - r \cdot \dv$.

For the first class of level Eulerian posets we also show that
all of the intervals are shellable. As a consequence the order complex
of each interval in this poset is homeomorphic to a sphere.

We end the paper with concluding remarks and open questions.

\section{Preliminaries}

For a non-negative matrix $M = (m_{i,j})_{1 \leq i \leq m, 1 \leq j \leq n}$
let $\Bin(M)$ be the $0,1$-matrix defined by
\begin{align*}
\Bin(M)_{i,j} 
=
\begin{cases}
1 & m_{i,j} > 0, \\ 0 & m_{i,j} = 0.
\end{cases}
\end{align*}
Furthermore, we let $J$ be the matrix with all the entries equal to~$1$.
Note that the size of the matrix~$J$ will be understood from the context.
We call a non-negative square matrix $M$ {\em primitive} if
there is a positive integer $k$ such that $\Bin(M^{k}) = J$.
The smallest such power $\gamma$ is called the {\em exponent}.

An equivalent way to encode the information of the matrix~$M$
is the associated digraph, where there is a directed edge
$i \longrightarrow j$ if and only if $m_{i,j} = 1$.
Hence the matrix~$M$ is primitive
if and only if there exist a positive integer~$k$ such that 
there is a directed path from any vertex to any other vertex with exactly $k$~steps.

A poset $P$ is {\em ranked} if there is a function $\rho : P \longrightarrow \Zzz$
such that for all cover relations $x \prec y$ we have $\rho(x) + 1 = \rho(y)$.
For two elements $x$ and $y$ in a poset such that $x \leq y$, the {\em interval}
$[x,y]$ is the set $\{z \in P : x \leq z \leq y\}$.
A poset $P$ is {\em Eulerian} if all non-singleton intervals have the same number
of elements of even rank as the number elements of odd rank.
To verify the Eulerian condition, it is enough to do so for intervals
of even rank; see~\cite[Lemma~4.4]{Ehrenborg_k-Eulerian}
or~\cite[Lemma~2.6]{Ehrenborg_Readdy_Eulerian_binomial}.
We call the condition that all the intervals of length~$k$ are Eulerian
the {\em rank~$k$ Eulerian condition}.

For an $n \times n$ $0,1$-matrix $M$ the associated
{\em level poset} is a poset on the set $\{1,2, \ldots, n\} \times \Zzz$
such that the cover relation is given by
$(i,s) \prec (j,s+1)$ if and only if $m_{i,j} = 1$.
Note that the order relation $(i,s) \leq (j,p)$ holds if and only if
the equality $\Bin(M^{p-s})_{i,j} = 1$ holds.
This equivalence also holds in the extreme case when $p=s$
since $M^{0}$ is the $n \times n$ identity matrix.
Hence the number of elements of rank $s$ in the interval
$[(i,0),(j,p)]$ is the $(i,j)$ entry of the matrix product
$\Bin(M^{s}) \cdot \Bin(M^{p-s})$.
The rank $p$ Eulerian condition is therefore the following matrix identity
\begin{align*}
\sum_{s=0}^{p} (-1)^{s} \cdot \Bin(M^{s}) \cdot \Bin(M^{p-s})    =    0.
\end{align*}
By the comment above, it is enough to verify this condition for even positive integers $p$.
Note that an equivalent formulation of the rank $2$ Eulerian condition
is the following identity:
\begin{align*}
M^{2} = 2 \cdot \Bin(M^{2}) .
\end{align*}
This condition is equivalent to that every rank~$2$ interval is isomorphic to a diamond,
that is, the rank~$2$ Boolean algebra.

\begin{proposition}
Assume that $M$ is an $n \times n$ primitive $0,1$-matrix with exponent $\gamma$.
\begin{itemize}
\item[(a)]
Let $W$ be the matrix 
\begin{align*}
W = \sum_{s=1}^{\gamma-1} (-1)^{s} \cdot \Bin(M^{s}) .
\end{align*}
If all of the row sums and the column sums of $W$ is
$(-1)^{\gamma-1} \cdot n/2 - 1$
then the matrix $M$ satisfies the rank $2k$ Eulerian condition for $k \geq \gamma$.
\item[(b)]
If the matrix $M$ satisfies the Eulerian condition of rank $2\gamma$
and $(J - \Bin(M^{\gamma-1}))^{2} = 0$ then
the matrix $M$ satisfies the Eulerian condition of rank $2\gamma - 2$.
\end{itemize}
\label{proposition_last_Eulerian_condition}
\end{proposition}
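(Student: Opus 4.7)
The plan is to exploit the fact that $M$ primitive with exponent $\gamma$ forces $\Bin(M^s) = J$ for every $s \geq \gamma$. Thus, for any even $p$, the Eulerian sum $\sum_{s=0}^{p}(-1)^s \Bin(M^s)\Bin(M^{p-s})$ splits naturally into three ranges: (i) $0 \leq s < \gamma$, where only the right factor collapses to $J$; (ii) $\gamma \leq s \leq p-\gamma$, where both factors are $J$; and (iii) $p-\gamma < s \leq p$, the mirror of (i). Setting $B := I + W = \sum_{s=0}^{\gamma-1}(-1)^s\Bin(M^s)$, each region is easily evaluated in terms of $B$, $\Bin(M^{\gamma-1})$, and $J$.

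For part~(a), the stated row and column sum condition on $W$ combines with $IJ = JI = J$ to give the compact identities
\begin{align*}
BJ = JB = (-1)^{\gamma-1}\,\frac{n}{2}\,J.
\end{align*}
For $k \geq \gamma$, the three-region decomposition of the rank $2k$ sum becomes, after reindexing $t = 2k - s$ in range~(iii),
\begin{align*}
BJ \;+\; \biggl(\sum_{s=\gamma}^{2k-\gamma}(-1)^s\biggr) n J \;+\; JB.
\end{align*}
The middle range has the odd number $2k - 2\gamma + 1$ of alternating-sign terms, beginning and ending with $(-1)^\gamma$, so sums to $(-1)^\gamma$. The total therefore equals $2(-1)^{\gamma-1}(n/2)J + (-1)^\gamma n J = 0$.

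For part~(b), the same split applied to the rank $2\gamma$ identity (middle range collapses to the single index $s=\gamma$) yields $BJ + (-1)^\gamma nJ + JB = 0$, which is the standing hypothesis. For rank $2\gamma - 2$, the middle range collapses instead to $s = \gamma - 1$, producing the term $(-1)^{\gamma-1}\Bin(M^{\gamma-1})^2$, while the outer sums give $AJ + JA$ with $A := B - (-1)^{\gamma-1}\Bin(M^{\gamma-1}) = \sum_{s=0}^{\gamma-2}(-1)^s\Bin(M^s)$. Expanding $A$ and using the rank $2\gamma$ identity to eliminate $BJ + JB$, the rank $2\gamma - 2$ expression simplifies to
\begin{align*}
(-1)^{\gamma-1}\bigl[nJ - J\Bin(M^{\gamma-1}) - \Bin(M^{\gamma-1})J + \Bin(M^{\gamma-1})^2\bigr] = (-1)^{\gamma-1}(J - \Bin(M^{\gamma-1}))^2,
\end{align*}
which vanishes by the second hypothesis. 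The main delicacy throughout is the sign and parity bookkeeping for the middle range; once the three-region split is in place, both parts reduce to short algebraic manipulations, and the content of each hypothesis is transparent: the row and column sum condition in~(a) encodes cancellation between the boundary and bulk contributions, while $(J - \Bin(M^{\gamma-1}))^2 = 0$ in~(b) supplies exactly the quadratic correction needed to descend one rank level from $2\gamma$ to $2\gamma - 2$.
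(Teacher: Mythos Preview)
Your proof is correct and follows essentially the same three-region decomposition as the paper; packaging the boundary contributions via $B = I + W$ rather than separating the terms $s \in \{0,2k\}$ from $W$ slightly streamlines the bookkeeping. For part~(b) you compute the rank $2\gamma$ and rank $2\gamma-2$ sums each directly in terms of $B$, $\Bin(M^{\gamma-1})$, and $J$ and then substitute, whereas the paper instead uses the index shift $\Bin(M^{s}) = \Bin(M^{s+2})$ for $s \geq \gamma$ to rewrite the rank $2\gamma-2$ sum as the rank $2\gamma$ sum minus three central terms---but the underlying idea and the role of the hypothesis $(J-\Bin(M^{\gamma-1}))^{2}=0$ are identical.
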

\begin{proof}
For $k \geq \gamma$
the sum in the Eulerian condition of rank $2k$ can be rewritten as
\begin{align}
\sum_{s=0}^{2k}
(-1)^{s} \cdot \Bin(M^{s}) \cdot \Bin(M^{2k-s}) 
& =
2J
+
W \cdot J
+
J \cdot W
+
(-1)^{\gamma} \cdot J^{2} .
\label{equation_rank_2k}
\end{align}
This identity follows from the facts
the two terms $s=0$ and $s=2k$ both yield the matrix $J$,
the terms $1 \leq s \leq \gamma-1$ yield $W \cdot J$,
the terms $2k-\gamma+1 \leq s \leq 2k-1$ yield $J \cdot W$
and the sum of $2k-2\gamma+1$ middle terms yield $(-1)^{\gamma} \cdot J^{2}$.
The matrix on the right-hand side of~\eqref{equation_rank_2k} equals
$(2 + 2c + (-1)^{\gamma} \cdot n) \cdot J = 0$
where $c$ is the row and column sums of~$W$.
This proves statement~(a).

To prove statement~(b),
note the sum in the rank $2\gamma-2$ Eulerian condition can be rewritten as
\begin{align}
\label{equation_rank_2_gamma-2}
&
(-1)^{\gamma-1} \cdot \Bin(M^{\gamma-1})^{2}
+
\sum_{\substack{s=0 \\ s \neq \gamma-1}}^{2\gamma-2}
(-1)^{s} \cdot \Bin(M^{s}) \cdot \Bin(M^{2\gamma-2-s}) \\
& =
\nonumber
(-1)^{\gamma-1} \cdot \Bin(M^{\gamma-1})^{2}
+
\sum_{\substack{s=0 \\ s \neq \gamma-1,\gamma,\gamma+1}}^{2\gamma}
(-1)^{s} \cdot \Bin(M^{s}) \cdot \Bin(M^{2\gamma-s}) ,
\end{align}
where we use that $\Bin(M^{s}) = J = \Bin(M^{s+2})$ for $s \geq \gamma$.
Since $J = \Bin(M^{\gamma}) = \Bin(M^{\gamma+1})$,
we reformulate the condition that the square vanishes as follows:
\begin{align}
\label{equation_square}
0 
& =
(-1)^{\gamma} \cdot (\Bin(M^{\gamma-1}) - J)^{2} \\
\nonumber
& =
(-1)^{\gamma} \cdot \Bin(M^{\gamma-1})^{2}
+
\sum_{s = \gamma-1}^{\gamma+1}
(-1)^{s} \cdot \Bin(M^{s}) \cdot \Bin(M^{2\gamma-s}) .
\end{align}
Adding these two identities~\eqref{equation_rank_2_gamma-2}
and~\eqref{equation_square}
yields the sum of the rank $2\gamma$ Eulerian condition
which is equal to $0$. Hence the rank $2\gamma-2$ Eulerian condition holds.
\end{proof}

\section{The $\ab$- and $\cd$-index and coalgebra techniques}

A poset $P$ is {\em graded} if it is ranked and has a unique minimal element $\hz$
and a unique maximal element $\ho$. We say that $P$ has rank $n+1$ if
$\rho(\hz) = 0$ and $\rho(\ho) = n+1$.

Let $P$ be a graded poset of rank $n+1$.
Let $S = \{s_{1} < s_{2} < \cdots < s_{k}\} \subseteq \{1,2,\ldots, n\}$.
We define the flag $f$-vector entry $f_{S}(P) = f_{S}$ to be the number of chains~$c$
in the poset $P$ with elements having ranks in the set $S$, that is,
\begin{align*}
f_{S}
& =
\{\{\hz = x_{0} < x_{1} < \cdots < x_{k} < x_{k+1} = \ho\} \subseteq P : 
    \rho(x_{i}) = s_{i} \text{ for } i =0,1, \ldots, k+1\} ,
\end{align*}
where $s_{0} = 0$ and $s_{k+1} = n+1$.
For two elements $x$ and $y$ in the poset $P$ such that $x \leq y$,
let the rank difference be denoted by $\rho(x,y) = \rho(y)-\rho(x)$.
The {\em $\ab$-index} is defined as
\begin{align*}
\Psi(P)
=
\sum_{c}
(\av-\bv)^{\rho(x_{0},x_{1})-1} \cdot \bv \cdot
(\av-\bv)^{\rho(x_{1},x_{2})-1} \cdot \bv \cdots
\bv \cdot (\av-\bv)^{\rho(x_{k-1},x_{k})-1} \cdot
\bv \cdot (\av-\bv)^{\rho(x_{k},x_{k+1})-1} ,
\end{align*}
where the sum is over all chains
$c = \{\hz = x_{0} < x_{1} < \cdots < x_{k} < x_{k+1} = \ho\}$
in the poset $P$.
Note that the $\ab$-index of a graded poset $P$ of rank $n+1$
is a homogeneous polynomial of degree $n$.

When the poset $P$ is Eulerian, the $\ab$-index $\Psi(P)$
can be expressed in terms of the variables
$\cv = \av+\bv$ and $\dv = \av\bv+\bv\av$.
This fact was first proved by 
Bayer and Klapper~\cite{Bayer_Klapper} for face lattices
of convex polytopes and
they extended it to Eulerian posets;
see~\cite[Theorem~4]{Bayer_Klapper}.
A direct proof for Eulerian posets
was given by Stanley~\cite[Theorem~1.1]{Stanley}.

We now introduce the coalgebra techniques of the paper~\cite{Ehrenborg_Readdy_coproducts}.
However, in order to work in the setting of formal series, we use the
reformulation of~\cite[Section~8]{Ehrenborg_Hetyei_Readdy},
that is, we introduce a variable $\tv$ to play the role of the tensor sign.
Let $\zabs$ and $\zabts$ be the two rings of formal series
in the non-commutative variables $\av$ and $\bv$,
respectively, $\av$, $\bv$ and $\tv$.
Let $\Delta: \zabs \longrightarrow \zabts$ be a derivation such that
$\Delta(1) = 0$ and $\Delta(\av) = \Delta(\bv) = \tv$.
Note that
$\Delta(\cv) = 2 \tv$ and $\Delta(\dv) = \cv\tv+\tv\cv$.
Thus the derivation restricts to a linear map $\zcds \longrightarrow \zcdts$.
Recall that
for an $\ab$-series $u$, where $u$ has no constant term,
we have the identity $1/(1-u) = \sum_{k \geq 0} u^{k}$.
Hence we obtain
\begin{align}
\Delta\left(\frac{1}{1-u}\right) = \frac{1}{1-u} \cdot \Delta(u) \cdot \frac{1}{1-u} .
\label{equation_Delta_on_geometric_series}
\end{align}
For a $0,1$-matrix $M$ introduce the series $K_{M}(t)$ by
\begin{align}
K_{M}(t) = \sum_{k \geq 0} \Bin(M^{k+1}) \cdot t^{k} .
\label{equation_K}
\end{align}
Let $\Psi$ be the matrix of the $\ab$-series $\Psi_{i,j}$ from
equation~\eqref{equation_Psi_series}, that is,
\begin{align*}
\Psi = (\Psi_{i,j})_{1 \leq i,j \leq n} .
\end{align*}
Theorem~8.1 in~\cite{Ehrenborg_Hetyei_Readdy} gives a method
to verify the $\ab$-series for a level poset:
\begin{theorem}[Ehrenborg--Hetyei--Readdy]
The $\ab$-series matrix $\Psi_{M}$ of a level poset associated to the $0,1$-matrix $M$
is the unique solution to the equation system
\begin{align*}
\Psi_{M}\vrule_{\: \av=t,\bv=0}
& =
K_{M}(t) , \\
\Delta(\Psi_{M})
& =
\Psi_{M} \cdot \tv \cdot \Psi_{M} .
\end{align*}
\label{theorem_unique_solution}
\end{theorem}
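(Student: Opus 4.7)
The plan is to verify that the $\ab$-series matrix $\Psi_M$ arising from the level poset satisfies both equations (existence), and then to argue that the two conditions together pin down every matrix coefficient of any candidate solution (uniqueness).

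For the boundary equation, I would observe that substituting $\bv = 0$ into the definition of the $\ab$-index kills every chain with an interior element, since each such chain carries at least one factor of $\bv$; only the two-element chain $\{\hz < \ho\}$ survives, contributing $(\av - \bv)^{n}\big|_{\av=t,\bv=0} = t^{n}$ for a poset of rank $n+1$. Applied to the interval $[(i,0),(j,k)]$ of rank $k$, this yields $t^{k-1}$ precisely when $\Bin(M^{k})_{i,j} = 1$; summing over $k \geq 1$ and reindexing by $m=k-1$ produces $K_M(t)_{i,j}$. For the derivation equation I would invoke the Ehrenborg--Readdy coproduct identity, which in the $\tv$-formulation reads $\Delta(\Psi([x,y])) = \sum_{x < z < y} \Psi([x,z]) \cdot \tv \cdot \Psi([z,y])$. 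Applied to each $\Psi([(i,0),(j,k)])$ and summed over $k \geq 1$, the double sum over $k$ and the intermediate element $(l,s)$ with $0 < s < k$ factorizes, thanks to the level identity $\Psi([(l,s),(j,k)]) = \Psi([(l,0),(j,k-s)])$, into a product of two independent sums in $s$ and $k-s$; the result is $\sum_{l} \Psi_{i,l} \cdot \tv \cdot \Psi_{l,j} = (\Psi_M \cdot \tv \cdot \Psi_M)_{i,j}$.

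For uniqueness, I would expand each entry of $\Psi_M$ as $\sum_w c_w \cdot w$ over $\ab$-monomials with matrix coefficients $c_w$. Since $\Delta$ replaces exactly one letter by $\tv$, the coefficient of $u \tv v$ on the left of the derivation equation is $c_{u\av v} + c_{u\bv v}$, while on the right it is the matrix product $c_u \cdot c_v$; thus the derivation equation is equivalent to the coefficient system
\begin{align*}
c_{u\av v} + c_{u\bv v} = c_u \cdot c_v \qquad \text{for all $\ab$-monomials } u, v ,
\end{align*}
and the boundary equation pins down $c_{\av^n} = \Bin(M^{n+1})$. Given a second solution with coefficients $c_w'$, I would set $d_w = c_w - c_w'$ and subtract the two systems to obtain the linear recurrence $d_{u\av v} + d_{u\bv v} = c_u \cdot d_v + d_u \cdot c_v'$, and then induct on the number of $\bv$'s in $w$: the base case is $w = \av^n$, and for $w = u \bv v$ with $u$ and $v$ each containing strictly fewer $\bv$'s one has $d_u = d_v = 0$ by induction, $d_{u\av v} = 0$ also by induction, so the recurrence forces $d_w = 0$.

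The step I expect to require the most care is deriving the Ehrenborg--Readdy coproduct identity in the $\tv$-formulation, which is what actually converts the combinatorial coproduct of intervals into the algebraic equation $\Delta(\Psi_M) = \Psi_M \cdot \tv \cdot \Psi_M$; once that identity is in hand, both the existence verification and the uniqueness recursion amount to careful bookkeeping.
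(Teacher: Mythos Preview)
The paper does not supply its own proof of this theorem: it is quoted verbatim as Theorem~8.1 of~\cite{Ehrenborg_Hetyei_Readdy} and used as a black box to verify candidate $\cd$-series. So there is no in-paper argument to compare against.

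That said, your proposal is a correct and natural proof. A couple of comments on the uniqueness step, where the bookkeeping deserves a bit more care than you indicate. First, the base case of your induction is not a single monomial but the whole family $\{\av^{n} : n \geq 0\}$, including the empty word; you should note explicitly that the boundary equation fixes $c_{1} = \Bin(M) = M$, since this constant is used repeatedly on the right-hand side of the recurrence. Second, in the inductive step you write ``$u$ and $v$ each containing strictly fewer $\bv$'s'', which is true but only because the distinguished $\bv$ belongs to neither; it is cleaner to say you split at the \emph{leftmost} $\bv$, so that $u = \av^{m}$ is a pure $\av$-power and $d_{u} = 0$ directly from the base case, while $v$ and $u\av v$ each have exactly one fewer $\bv$ than $w$. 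This avoids any worry about whether the induction is well-founded when $u$ or $v$ happens to be empty.

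Your existence argument is standard: the boundary check is the observation that only the trivial chain survives $\bv \mapsto 0$, and the derivation identity is exactly the Ehrenborg--Readdy coproduct formula transported into the $\tv$-formalism and summed over ranks using the shift-invariance of a level poset. You are right that the coproduct identity is the one ingredient with real content; everything else is linear algebra over $\zabts$.
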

Note that when $\Psi_{M}$ is expressed in terms $\cv$'s and $\dv$'s,
we have the substitution
\begin{align*}
\Psi_{M}\vrule_{\: \av=t,\bv=0} = \Psi_{M}\vrule_{\: \cv=t,\dv=0} .
\end{align*}

For any non-commutative monomial or word $u = u_{1} u_{2} \cdots u_{k}$
the reverse of $u$ is defined to be
$u^{*} = u_{k} \cdots u_{2} u_{1}$. 
For $\abt$-monomials
we extend the reversing map $u \longmapsto u^{*}$ to
$\zabts$ by linearity. Note that this map on $\zabts$ is an anti-isomorphism, that is,
$(u \cdot v)^{*} = v^{*} \cdot u^{*}$. Also observe $\cv^{*} = \cv$ and $\dv^{*} = \dv$
and hence this map also reverses $\cd$-monomials and thus also $\cd$-series.

Recall that the transpose of a matrix is the operation of reflecting
the matrix entries over the main diagonal.
We now introduce the operation of reflecting the matrix over its anti-diagonal.
Furthermore since some of the matrices and vectors we work with contain
non-commutative variables,
we will also include reversing these elements.
More formally, if $M = (m_{i,j})_{1 \leq i \leq m, 1 \leq j \leq n}$ is an $m \times n$ matrix
then we define $M^{\antiflip}$ to be the $n \times m$ matrix
such that
$(M^{\antiflip})_{i,j} = m_{m+1-j,n+1-i}^{*}$.
We use the flipped letter A to denote this operation:
the letter A for anti-diagonal and this letter is flipped over the antidiagonal.
Note that under this operation the entry $w$ in the upper right-hand corner
remains in the same location under this operation and maps to~$w^{*}$.
Similarly, the entry in the lower-left hand corner
also remains in the same location.
We also have the following two relations
\begin{align}
(M + N)^{\antiflip} & = M^{\antiflip} + N^{\antiflip} ,
&
(M \cdot N)^{\antiflip} & = N^{\antiflip} \cdot M^{\antiflip} .
\label{equation_anti_flip}
\end{align}
Note that the last relation would not hold if we did not
include the anti-isomorphism $w \longmapsto w^{*}$ in the definition.
Finally, note that for a vector $\vec{v}$ the two vectors
$\vec{v}^{T}$ and $\vec{v}^{\antiflip}$ are reverses of each other.

\section{The first class of level Eulerian posets}
\label{section_The_first_class}

We now present the first class of level Eulerian posets
and compute its $\cd$-series.
Let $r$ be a nonnegative integer.
To construct the $(2r+2) \times (2r+2)$-matrix $M$
we define three $(r+1) \times (r+1)$-matrices:
\begin{itemize}
\item[$(A)$]
Let $A = (a_{i,j})_{0 \leq i,j \leq r}$
where
$a_{i,j} = 1$ if $i=0$
and $a_{i,j}=0$ otherwise.

\item[$(B)$]
Let $B = (b_{i,j})_{0 \leq i,j \leq r}$
where
$b_{i,j} = 1$ if
$0 \leq i-j \leq 1$
and $b_{i,j}=0$ otherwise.
Note that $B^{\antiflip} = B$.

\item[$(F)$]
Let $F = (f_{i,j})_{0 \leq i,j \leq r}$
where
$f_{i,j} = 1$ if $i=0$ or $j=r$,
and $f_{i,j}=0$ otherwise.
Again, $F^{\antiflip} = F$.
\end{itemize}
We begin with a few relations among these matrices.
\begin{lemma}
The following seven relations hold among the four matrices $A$, $B$, $F$ and $J$:
\begin{align}
\label{equation_lemma_M_A}
A^{2} & = A, & {A^{\antiflip}}^{2} & = A^{\antiflip} , \\
\label{equation_lemma_M_J}
B \cdot J & = 2 \cdot J - A, &
J \cdot B & = 2 \cdot J - A^{\antiflip}, &
J \cdot A & = A^{\antiflip} \cdot J = J, \\
\label{equation_lemma_M_F}
&& A \cdot B + B \cdot A^{\antiflip} & = 2 \cdot F.
\end{align}
\label{lemma_relations_M}
\end{lemma}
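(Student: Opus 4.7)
The plan is to verify each of the seven identities by a direct entry-wise calculation, using the explicit piecewise definitions of $A$, $B$, $F$ together with a preliminary description of $A^{\antiflip}$. From the definition $(A^{\antiflip})_{i,j} = a_{r-j,r-i}$, the matrix $A^{\antiflip}$ has a $1$ exactly when $r-j=0$, i.e., its rightmost column is all $1$'s and every other entry is $0$. Combined with the description of $A$ (top row all $1$'s, other rows zero), this makes all of the products below easy to read off.

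For $A^{2}=A$ and $(A^{\antiflip})^{2}=A^{\antiflip}$ I would use the outer-product factorisations $A = e_{0}\cdot \mathbf{1}^{T}$ and $A^{\antiflip} = \mathbf{1}\cdot e_{r}^{T}$, so that in each case the product collapses via $\mathbf{1}^{T} e_{0} = 1$ and $e_{r}^{T}\mathbf{1} = 1$. For the identities in line~\eqref{equation_lemma_M_J} I would compute the row and column sums of $B$: row $0$ contains only $B_{0,0}=1$ and each later row contains the two ones $B_{i,i-1}$ and $B_{i,i}$; dually column $r$ contains only $B_{r,r}=1$ while each earlier column contains two ones. Hence $B \cdot J$ is constant along rows with row sum $1$ at row $0$ and $2$ elsewhere, matching $2J - A$ entry by entry; symmetrically $J\cdot B = 2J - A^{\antiflip}$. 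The remaining identities $J\cdot A = A^{\antiflip}\cdot J = J$ are immediate from the fact that every column of $A$ and every row of $A^{\antiflip}$ sums to $1$.

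The final relation $A \cdot B + B \cdot A^{\antiflip} = 2\cdot F$ is the one where care is needed. The product $A\cdot B$ is supported on row $0$, and its $(0,j)$ entry equals the $j$th column sum of $B$, namely $2$ for $j<r$ and $1$ for $j=r$. Dually $B \cdot A^{\antiflip}$ is supported on column $r$, with $(i,r)$ entry equal to the $i$th row sum of $B$, namely $1$ for $i=0$ and $2$ for $i\geq 1$. Adding the two matrices gives $2$ in every entry of row $0$ and of column $r$ and $0$ elsewhere, matching $2F$.

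The main obstacle, if one can call it that, is simply the bookkeeping at the overlap entry $(0,r)$, where both $A\cdot B$ and $B\cdot A^{\antiflip}$ contribute; the definitions have been arranged precisely so that the two defective row/column sums ($1$ instead of $2$) combine to give the correct value $2$, and this is what makes the right-hand side $2F$ rather than something more complicated.
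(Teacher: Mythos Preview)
Your proposal is correct and follows essentially the same approach as the paper: the paper also notes that $A$ and $A^{\antiflip}$ are projections, dispatches the identities involving $J$ as straightforward row/column-sum computations, and for the last relation describes $A\cdot B$ as having top row $(2,2,\ldots,2,1)$ and $B\cdot A^{\antiflip}$ as having last column $(1,2,2,\ldots,2)^{T}$, then adds. Your outer-product factorisations $A=e_{0}\mathbf{1}^{T}$ and $A^{\antiflip}=\mathbf{1}e_{r}^{T}$ and your explicit remark about the overlap at entry $(0,r)$ make the bookkeeping slightly more transparent, but the substance is identical.
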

\begin{proof}
The two matrices $A$ and $A^{\antiflip}$ are projections
yielding~\eqref{equation_lemma_M_A}.
The three relations in~\eqref{equation_lemma_M_J}
follows since multiplication with the matrix $J$ is straightforward.
Note that $A \cdot B$ is the matrix
where the top row is $(2, 2, \ldots, 2, 1)$
and the remaining rows are zero.
Similarly,
$B \cdot A^{\antiflip}$ is the matrix where the last column is
$(1, 2, 2, \ldots, 2)^{T}$
and the remaining columns are zero.
Adding these two last observations yields
relation~\eqref{equation_lemma_M_F}.
\end{proof}

\begin{figure}
\begin{center}
\begin{tikzpicture}
\foreach \x in {0,1} {\foreach \y in {0,1,2,3,4,5}
{\draw[fill] (\x,\y) circle (0.04);};};

\foreach \y in {0,1,2,3,4}
{
\draw[-,very thick] (1,{\y}) -- (1,{\y+1}) -- (0,{\y}) -- (0,{\y+1});
\draw[-,very thin] (0,{\y+1}) -- (1,{\y});
};
\end{tikzpicture}
\hspace*{15mm}
\begin{tikzpicture}
\foreach \x in {0,1,2,3} {\foreach \y in {0,1,2,3,4,5}
{\draw[fill] (\x,\y) circle (0.04);};};

\foreach \y in {0,1,2,3,4}
{
\draw[-,very thin] (2,{\y}) -- (0,{\y+1}) -- (3,{\y}) -- (1,{\y+1}) -- cycle;
\draw[-,very thick] (0,{\y}) -- (0,{\y+1});
\draw[-,very thick] (0,{\y}) -- (1,{\y+1});
\draw[-,very thick] (3,{\y+1}) -- (2,{\y});
\draw[-,very thick] (3,{\y+1}) -- (3,{\y});
\draw[-,very thick] (0,{\y}) -- (2,{\y+1});
\draw[-,very thick] (2,{\y+1}) -- (1,{\y});
\draw[-,very thick] (1,{\y}) -- (3,{\y+1});
};
\end{tikzpicture}
\hspace*{15mm}
\begin{tikzpicture}
\foreach \x in {0,1,2,3,4,5} {\foreach \y in {0,1,2,3,4,5}
{\draw[fill] (\x,\y) circle (0.04);};};

\foreach \y in {0,1,2,3,4}
{
\foreach \a in {3,4,5} {\foreach \b in {0,1,2} {\draw[-,very thin] (\a,{\y}) -- (\b,{\y+1});};};
\foreach \b in {0,1,2} {\draw[-,very thick] (0,{\y}) -- (\b,{\y+1});};
\foreach \a in {3,4,5} {\draw[-,very thick] (\a,{\y}) -- (5,{\y+1});};
\draw[-,very thick] (0,{\y}) -- (3,{\y+1}) -- (1,{\y}) -- (4,{\y+1}) -- (2,{\y}) -- (5,{\y+1});
};
\end{tikzpicture}
\end{center}
\caption{The level Eulerian posets associated to matrix $M$ for $r=0$, $1$ and $2$.
The cover relations corresponding to the southwest block $J$ has been drawn
with thin lines.}
\label{figure_M_0_1_2}
\end{figure}
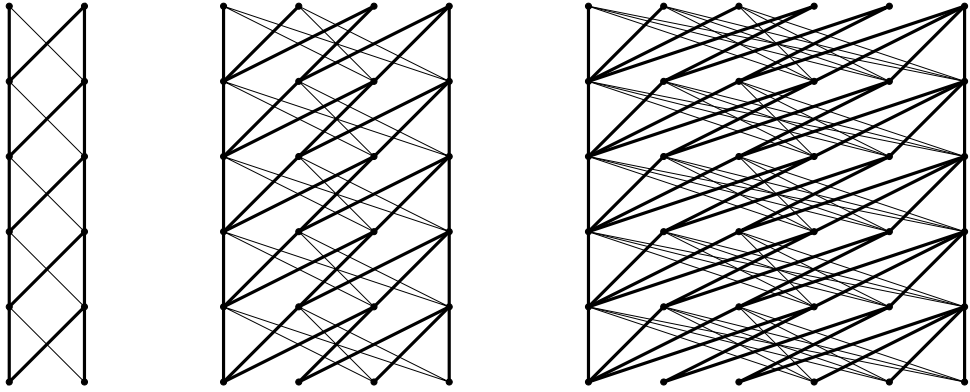

Let $M$ be the block matrix
\begin{align*}
M
=
\BMTwoByTwo{A}{B}{J}{A^{\antiflip}} .
\end{align*}
Note that $r=1$ yields Example~4.7 in~\cite{Ehrenborg_Hetyei_Readdy}
and $r=0$ yields
$M = \left(\begin{array}{c c}1&1\\1&1\end{array}\right)$,
that is, the infinite butterfly poset.
See Figure~\ref{figure_M_0_1_2}
for the level posets for $r = 0,1$ and $2$.
For $r=3$ the matrix $M$ and the matrix $\Bin(M^{2})$ are as follows
\begin{align*}
M
& =
\left(\begin{array}{c c c c c c c c c}
1 & 1 & 1 & 1 & \VerticalLine & 1 & 0 & 0 & 0 \\
0 & 0 & 0 & 0 & \VerticalLine & 1 & 1 & 0 & 0 \\
0 & 0 & 0 & 0 & \VerticalLine & 0 & 1 & 1 & 0 \\
0 & 0 & 0 & 0 & \VerticalLine & 0 & 0 & 1 & 1 \\ \hline
1 & 1 & 1 & 1 & \VerticalLine & 0 & 0 & 0 & 1 \\
1 & 1 & 1 & 1 & \VerticalLine & 0 & 0 & 0 & 1 \\
1 & 1 & 1 & 1 & \VerticalLine & 0 & 0 & 0 & 1 \\
1 & 1 & 1 & 1 & \VerticalLine & 0 & 0 & 0 & 1
\end{array}\right) ,
&
\Bin(M^{2})
& =
\left(\begin{array}{c c c c c c c c c}
1 & 1 & 1 & 1 & \VerticalLine & 1 & 1 & 1 & 1 \\
1 & 1 & 1 & 1 & \VerticalLine & 0 & 0 & 0 & 1 \\
1 & 1 & 1 & 1 & \VerticalLine & 0 & 0 & 0 & 1 \\
1 & 1 & 1 & 1 & \VerticalLine & 0 & 0 & 0 & 1 \\ \hline
1 & 1 & 1 & 1 & \VerticalLine & 1 & 1 & 1 & 1 \\
1 & 1 & 1 & 1 & \VerticalLine & 1 & 1 & 1 & 1 \\
1 & 1 & 1 & 1 & \VerticalLine & 1 & 1 & 1 & 1 \\
1 & 1 & 1 & 1 & \VerticalLine & 1 & 1 & 1 & 1
\end{array}\right) .
\end{align*}

\begin{proposition}
For $r \geq 1$ the matrix $M$ is primitive with exponent~$3$ and
$M$ is the adjacency matrix of a level Eulerian poset.
\label{proposition_M}
\end{proposition}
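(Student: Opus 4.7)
The plan is to exploit the block structure of $M$ together with Lemma~\ref{lemma_relations_M} to compute $M^2$ and $M^3$ block by block, after which both the primitivity statement and the Eulerian condition follow from Proposition~\ref{proposition_last_Eulerian_condition}. First I would expand
\begin{align*}
M^2
=
\BMTwoByTwo{A^{2}+BJ}{AB+BA^{\antiflip}}{JA+A^{\antiflip}J}{JB+{A^{\antiflip}}^{2}}
\end{align*}
and apply the seven identities of Lemma~\ref{lemma_relations_M} to each block. The relations $A^2=A$, $BJ=2J-A$, and $AB+BA^{\antiflip}=2F$ (and their antiflips) collapse this to
\begin{align*}
M^{2}
=
\BMTwoByTwo{2J}{2F}{2J}{2J} ,
\qquad
\Bin(M^{2})
=
\BMTwoByTwo{J}{F}{J}{J} .
\end{align*}
The equation $M^{2}=2\cdot\Bin(M^{2})$ is precisely the rank~$2$ Eulerian condition, and the appearance of $F$ (which has a zero in position $(1,0)$ for $r\geq 1$) shows that the exponent of $M$ is at least~$3$.

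Next I would multiply $M$ by the block form of $M^{2}$ to compute $M^{3}$, again using Lemma~\ref{lemma_relations_M}; each of the four resulting blocks has the form $cJ$ plus a non-negative correction with $c>0$ (for the $(2,1)$~block this is literally $(2r+4)\cdot J$, and the other three blocks gain a positive $J$-contribution from $BJ$, $JF$, or $AJ$ respectively), so all entries of $M^{3}$ are strictly positive. Thus $\Bin(M^{3})=J$ and the exponent equals~$3$.

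It remains to establish the Eulerian condition at every even rank. For rank $2k$ with $k\geq 3$ I would apply Proposition~\ref{proposition_last_Eulerian_condition}(a) with $\gamma=3$, where
\begin{align*}
W
=
-M+\Bin(M^{2})
=
\BMTwoByTwo{J-A}{F-B}{0}{J-A^{\antiflip}} .
\end{align*}
A row-by-row and column-by-column check, splitting off the atypical first row of the top-left block and the atypical last column of the top-right block, shows that every row sum and every column sum of $W$ equals $r=n/2-1$, as required. Finally, for the remaining rank~$4$ condition I would apply part~(b) of the proposition: the matrix
\begin{align*}
J-\Bin(M^{2})
=
\BMTwoByTwo{0}{J-F}{0}{0}
\end{align*}
is strictly block upper-triangular, so its square vanishes immediately, and part~(b) converts the rank~$6$ condition (already established in part~(a)) into the rank~$4$ condition.

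The main obstacle is the careful book-keeping of the row and column sums of $W$ needed for part~(a); the irregular behavior of the extremal row of $A$ and the extremal column of $A^{\antiflip}$ means one must consider several cases, although in each case the arithmetic collapses to the target value $r$. Everything else—the primitivity, the rank~$2$ condition, and the nilpotency that powers part~(b)—is a direct consequence of the block structure and the algebraic identities already recorded in Lemma~\ref{lemma_relations_M}.
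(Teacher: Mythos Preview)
Your proposal is correct and follows essentially the same route as the paper: compute $M^{2}$ via Lemma~\ref{lemma_relations_M} to obtain the rank~$2$ Eulerian condition and the block form of $\Bin(M^{2})$, observe $\Bin(M^{3})=J$ so that the exponent is~$3$, and then invoke Proposition~\ref{proposition_last_Eulerian_condition}(a) for even ranks $\geq 6$ and part~(b) for rank~$4$. The only differences are cosmetic---you spell out why the exponent is at least~$3$ and give more detail on the positivity of $M^{3}$ and the row/column sums of $W$, whereas the paper dispatches these as straightforward observations.
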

\begin{proof}
By Lemma~\ref{lemma_relations_M} and
that $J$ and $F$ are $0,1$-matrices,
the square of the matrix $M$ is given by
\begin{align*}
M^{2}
=
\BMTwoByTwo{A^{2} + BJ}{AB+BA^{\antiflip}}{JA+A^{\antiflip}J}{JB + {A^{\antiflip}}^{2}}
=
2 \cdot
\BMTwoByTwo{J}{F}{J}{J}
=
2 \cdot
\Bin(M^{2}) .
\end{align*}
This identity settles the Eulerian condition of rank $2$.
It is straightforward to observe that $\Bin(M^{3}) = J$
and hence $M$ is primitive with exponent $3$.

Note that all of the row and column sums of $- M + \Bin(M^{2})$ is $r = n/2-1$.
Hence by Proposition~\ref{proposition_last_Eulerian_condition} part (a)
we know that the Eulerian condition holds for even ranks
greater than or equal to $6$.
Finally, the rank $4$ Eulerian condition follows from
Proposition~\ref{proposition_last_Eulerian_condition} part~(b)
and the observation
\begin{align*}
\left(J - \Bin(M^{2})\right)^{2}
& =
\BMTwoByTwo{0}{J-F}{0}{0}^{2}
=
0.
\qedhere
\end{align*}
\end{proof}

We now will determine the $\cd$-index for all of the intervals in the level Eulerian poset associated
with the matrix~$M$.
Using that the exponent of the matrix $M$ is $3$,
we have that the power series $K_{M}(t)$ is given by
\begin{align*}
K_{M}(t) = \sum_{k \geq 0} \Bin(M^{k+1}) \cdot t^{k} = M + \Bin(M^{2}) \cdot t + J \cdot \frac{t^{2}}{1-t} .
\end{align*}
Next let $\phi$ denote the formal power series
\begin{align}
\label{equation_phi}
\phi 
& =
\frac{1}{1 - \cv - r\dv} 
=
\sum_{w} r^{\text{number of $\dv$s in $w$}} \cdot w ,
\end{align}
where the sum is over all $\cd$-monomials $w$.

Define the column vector $\vec{p}$ by
${\vec{p}\:}^{T} = (1, \underbrace{\cv, \ldots, \cv}_{r}, \underbrace{1, \ldots, 1}_{r+1})$.
Note that
${\vec{p}\:}^{\antiflip} = (\underbrace{1, \ldots, 1}_{r+1}, \underbrace{\cv, \ldots, \cv}_{r}, 1)$.
Also define the block $H$ by:
\begin{itemize}
\item[$(H)$]
Let $H$ be the $(r+1) \times (r+1)$ block matrix $B$ with $1$ subtracted off the upper right entry.
That is, for $r=0$ the matrix $H$ is the zero matrix $(0)$.
For $r \geq 1$ we have
$H = (h_{i,j})_{0 \leq i,j \leq r}$
where 
$h_{i,j} = 1$ if $0 \leq i-j \leq 1$,
$h_{0,r} = -1$
and
$h_{i,j} = 0$ otherwise.
Note that $H^{\antiflip} = H$.
\end{itemize}
Finally, let $C = C^{\antiflip}$ be the block matrix
$\BMTwoByTwo{0}{H}{0}{0}$.
\begin{proposition}
The $\cd$-index series for the level Eulerian poset associated to the matrix $M$ is given by
\begin{align*}
\Psi_{M}
& =
\vec{p} \cdot \phi \cdot {\vec{p}\:}^{\antiflip} + C .
\end{align*}
\end{proposition}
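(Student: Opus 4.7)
The plan is to apply Theorem~\ref{theorem_unique_solution} to the candidate $\Psi_{M} = P + C$, where for brevity I set $P := \vec{p}\cdot \phi \cdot \vec{p}^{\antiflip}$. Hence the two things to check are the initial condition $(P+C)\vrule_{\:\cv=t,\dv=0} = K_{M}(t)$ and the derivation identity $\Delta(P+C) = (P+C)\cdot \tv \cdot (P+C)$.

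For the initial condition, setting $\cv = t$ and $\dv = 0$ turns $\phi$ into the scalar $1/(1-t)$ and every $\cv$-entry of $\vec{p}$ and $\vec{p}^{\antiflip}$ into~$t$. A block-by-block comparison of the resulting rank-one matrix with $K_{M}(t) = M + \Bin(M^{2})\cdot t + J \cdot t^{2}/(1-t)$ shows that the SW, NW, and SE blocks agree on the nose. In the NE block the rank-one matrix disagrees with $K_{M}(t)$ in exactly the positions where $B$ has a~$0$ rather than a~$1$; the correction supplied by $C$, whose upper block is $H = B$ adjusted by $-1$ at position $(0,r)$, fixes this discrepancy, which is exactly why the $-1$ is placed at that corner.

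For the derivation identity, note first that $\Delta(C)=0$ since $C$ has constant entries, and $C\tv C = 0$ since $C^{2}=0$ by the strict block-upper-triangular shape of $C$. Expanding the LHS by the Leibniz rule and combining the central term with~\eqref{equation_Delta_on_geometric_series}, the crucial observation is the scalar identity
\begin{equation*}
\vec{p}^{\antiflip}\cdot \tv \cdot \vec{p} = 2\,\tv + r\,(\tv\cv + \cv\tv) = \Delta(\cv + r\,\dv),
\end{equation*}
which is immediate from the definition of $\vec{p}$ by pairing its entries with the opposite entries of $\vec{p}^{\antiflip}$. This yields $\vec{p}\cdot \Delta(\phi) \cdot \vec{p}^{\antiflip} = P \tv P$, which matches the $P\tv P$ piece on the RHS. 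The derivation identity therefore reduces to the mixed-term equality
\begin{equation*}
\Delta(\vec{p}) \cdot \phi \cdot \vec{p}^{\antiflip} + \vec{p} \cdot \phi \cdot \Delta(\vec{p}^{\antiflip}) = C \tv P + P \tv C.
\end{equation*}

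This mixed-term step is the main obstacle. Using $C^{\antiflip}=C$, $P^{\antiflip}=P$, $\phi^{*}=\phi$ (each monomial of $\phi$ and its reversal carry equal coefficients), and $\Delta(\vec{p})^{\antiflip}=\Delta(\vec{p}^{\antiflip})$, applying $^{\antiflip}$ turns one half of the identity into the other, so it suffices to prove $C\tv P = \Delta(\vec{p})\cdot \phi \cdot \vec{p}^{\antiflip}$. The vector $\Delta(\vec{p})$ has entry $2\tv$ in rows $1,\ldots,r$ and zero elsewhere, so the RHS is supported on those middle rows with $j$-dependence $\vec{p}^{\antiflip}_{j}$. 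A direct entry-wise computation gives $(C\tv P)_{i,j} = \bigl(\sum_{l=0}^{r} H_{i,l}\bigr)\cdot \tv \cdot \phi \cdot \vec{p}^{\antiflip}_{j}$, and the row sums of $H$ are $(0,2,2,\ldots,2)$, where the vanishing at row~$0$ is precisely due to the $-1$ correction. This row-sum pattern matches the coefficients of $\Delta(\vec{p})$ exactly, which completes the verification of the derivation identity and, by Theorem~\ref{theorem_unique_solution}, the proposition.
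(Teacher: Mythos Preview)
Your proof is correct and follows essentially the same route as the paper: the initial-condition check is the same block comparison, and for the derivation identity your row-sum computation $\sum_{l} H_{i,l}=(0,2,\ldots,2)$ is exactly the paper's identity $C\cdot\tv\cdot\vec{p}=\Delta(\vec{p})$ (together with its $\antiflip$-twin $\vec{p}^{\,\antiflip}\cdot\tv\cdot C=\Delta(\vec{p}^{\,\antiflip})$), and the scalar identity $\vec{p}^{\,\antiflip}\cdot\tv\cdot\vec{p}=\Delta(\cv+r\dv)$ is identical. One small wording slip in your NE-block discussion: the discrepancy $K_{M}(t)-(\text{rank-one part})$ is nonzero precisely where $B$ equals~$1$ (plus the extra $-1$ at $(0,r)$), not where $B$ equals~$0$; but your conclusion that $C$ with upper block $H$ is exactly the needed correction is right.
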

\begin{proof}
The first condition of Theorem~\ref{theorem_unique_solution} is straightforward to check
\begin{align*}
\Psi_{M}\vrule_{\: \cv=t,\dv=0}
& =
{\vec{p}\:\vrule_{\: \cv=t}} \cdot \frac{1}{1-t} \cdot {{\vec{p}\:}^{\antiflip}\:\vrule_{\: \cv=t}}
+
C
=
\left(\begin{array}{c c c c c}
J & \VerticalLine & tJ & \VerticalLine & J \\ \hline
tJ & \VerticalLine & t^{2}J & \VerticalLine & tJ \\ \hline
J & \VerticalLine & tJ & \VerticalLine & J
\end{array}\right)
\cdot \frac{1}{1-t}
+
C
=
K_{M}(t) ,
\end{align*}
where the $3 \times 3$ block matrix has the row partition $1$, $r$ and $r+1$
and 
the column partition is the reversed partition $r+1$, $r$ and~$1$.
Before verifying the second condition of Theorem~\ref{theorem_unique_solution},
observe 
\begin{align*}
{\vec{p}\:}^{\antiflip} \cdot \tv \cdot \vec{p} & = 2 \tv + r \cdot (\tv \cv + \cv \tv) = \Delta(\cv + r \cdot \dv) , \\
C \cdot \tv \cdot \vec{p} & = \Delta(\vec{p}) , \\
{\vec{p}\:}^{\antiflip} \cdot \tv \cdot C & = \Delta({\vec{p}\:}^{\antiflip}) , \\
C \cdot \tv \cdot C & = 0 = \Delta(C) .
\end{align*}
Next,  using the distributive law and the four identities above, we obtain:
\begin{align*}
\Psi_{M} \cdot \tv \cdot \Psi_{M}
& =
\left( \vec{p} \cdot \phi \cdot {\vec{p}\:}^{\antiflip} + C \right)
\cdot \tv \cdot
\left( \vec{p} \cdot \phi \cdot {\vec{p}\:}^{\antiflip} + C \right) \\
& =
\vec{p} \cdot \phi \cdot \Delta(\cv + r \cdot \dv) \cdot \phi \cdot {\vec{p}\:}^{\antiflip}
+
\Delta(\vec{p}) \cdot \phi \cdot {\vec{p}\:}^{\antiflip}
+
\vec{p} \cdot \phi \cdot \Delta({\vec{p}\:}^{\antiflip})
+
\Delta(C) \\
& =
\Delta(\vec{p} \cdot \phi \cdot {\vec{p}\:}^{\antiflip} + C)
=
\Delta(\Psi_{M}) ,
\end{align*}
where we used equation~\eqref{equation_Delta_on_geometric_series} with $u = \cv + r \cdot \dv$.
Hence $\Psi_{M}$ satisfies the two equations of Theorem~\ref{theorem_unique_solution}.
\end{proof}

Note that we have been using indices for the matrix $M$ starting with $0$.
That is, entry $(i,j)$ satisfies $0 \leq i,j \leq 2r+1$.
\begin{corollary}
Let the index $i$ belong to the set $\{0\} \cup \{r+1,r+2, \ldots, 2r+1\}$
and $j$ to the set $\{0,1, \ldots, r\} \cup \{2r+1\}$.
Assume that $(i,j) \neq (0,2r+1)$.
Then the $\cd$-index of the interval 
$[(i,0), (j,k+1)]$ in the level poset associated to matrix $M$
is the polynomial
\begin{align*}
\sum_{w} r^{\text{number of $\dv$'s in $w$}} \cdot w
\end{align*}
where $w$ ranges over all $\cd$-monomials of degree $k$.
In the case $(i,j) = (0,2r+1)$, we need the extra assumption $k \geq 1$.
\label{corollary_M}
\end{corollary}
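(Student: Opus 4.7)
The plan is to read off the $(i,j)$ entry of the formula $\Psi_{M} = \vec{p}\cdot\phi\cdot{\vec{p}\:}^{\antiflip}+C$ from the preceding proposition. First, observe that from $\vec{p}\:^{T}=(1,\underbrace{\cv,\ldots,\cv}_{r},\underbrace{1,\ldots,1}_{r+1})$ the entry $(\vec{p})_{i}$ equals $1$ precisely when $i\in\{0\}\cup\{r+1,\ldots,2r+1\}$ and equals $\cv$ otherwise. Symmetrically, from ${\vec{p}\:}^{\antiflip}$ the entry $({\vec{p}\:}^{\antiflip})_{j}$ equals $1$ precisely when $j\in\{0,1,\ldots,r\}\cup\{2r+1\}$. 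These are exactly the index sets for $i$ and $j$ stated in the corollary, so for any such pair
\begin{align*}
\bigl(\vec{p}\cdot\phi\cdot{\vec{p}\:}^{\antiflip}\bigr)_{i,j}
\;=\;(\vec{p})_{i}\cdot\phi\cdot({\vec{p}\:}^{\antiflip})_{j}
\;=\;\phi.
\end{align*}

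Next I would account for the correction matrix $C=\BMTwoByTwo{0}{H}{0}{0}$. Its support lies in the block $0\leq i\leq r$, $r+1\leq j\leq 2r+1$. Intersecting this block with the two index sets from the corollary leaves the single cell $(i,j)=(0,2r+1)$, at which $C_{0,2r+1}=H_{0,r}=-1$. For every other admissible pair $(i,j)$, the entry $C_{i,j}$ vanishes, and hence
\begin{align*}
\Psi_{M,\,i,j}\;=\;\phi\;=\;\sum_{w}r^{\,\text{number of $\dv$'s in }w}\cdot w,
\end{align*}
where the sum is over all $\cd$-monomials $w$.

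Finally, I would extract the homogeneous degree-$k$ component. Because the $\cd$-index of a graded poset of rank $k+1$ is homogeneous of degree $k$, and $\Psi_{i,j}=\sum_{k\geq 1}\Psi([(i,0),(j,k)])$ by~\eqref{equation_Psi_series}, the homogeneous degree-$k$ part of $\Psi_{i,j}$ is exactly $\Psi([(i,0),(j,k+1)])$. Extracting the degree-$k$ piece of $\phi=1/(1-\cv-r\dv)$ (with $\deg\cv=1$, $\deg\dv=2$) gives the claimed polynomial. In the excluded cell $(0,2r+1)$, the $-1$ coming from $C$ is a scalar and therefore perturbs only the degree-$0$ piece, so for $k\geq 1$ the degree-$k$ extraction is unaffected and the same formula holds; this is exactly the content of the final sentence of the corollary.

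The argument is essentially bookkeeping on top of the preceding proposition; there is no real obstacle, but the one point requiring care is the matching of the two index sets in the corollary with the positions of the $1$-entries of $\vec{p}$ and ${\vec{p}\:}^{\antiflip}$ together with the fact that the unique overlap of the permitted region with the support of $C$ is the pair $(0,2r+1)$, which is precisely the pair singled out in the statement.
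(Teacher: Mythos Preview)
Your proof is correct and follows the same approach as the paper, which simply says ``The result follows from the expansion~\eqref{equation_phi}.'' You have spelled out in detail exactly the bookkeeping the paper leaves implicit: matching the index sets in the corollary with the positions of the $1$-entries of $\vec{p}$ and ${\vec{p}\:}^{\antiflip}$, checking that $C$ contributes only at $(0,2r+1)$, and extracting the degree-$k$ part. One tiny caveat: your assertion $C_{0,2r+1}=H_{0,r}=-1$ holds only for $r\geq 1$; when $r=0$ the matrix $H$ is defined to be the zero matrix, so $C_{0,1}=0$ and the extra hypothesis $k\geq 1$ is in fact unnecessary---but the corollary as stated is still true, so this does not affect your argument.
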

\begin{proof}
The result follows from the expansion~\eqref{equation_phi}.
\end{proof}

\section{The second class of level Eulerian posets}
\label{section_The_second_class}

We now introduce the second class of level Eulerian posets.
Note that the matrix $M$ of the previous section
and the matrix $N$ in this section are very close,
that is, they differ in four entries only.
However
the calculation of the $\cd$-series for the level poset associated to the matrix $N$
is much longer.

For this class we restrict the parameter $r$ to be greater than or equal to $2$.
We begin by introducing two new blocks:
\begin{itemize}
\item[$(A')$]
Let $A' = (a'_{i,j})_{0 \leq i,j \leq r}$
where $a'_{i,j} = 1$ if $i=0$ and $j \leq r-1$,
or $i=1$ and $j=r$. Otherwise let
$a'_{i,j} = 0$.
\item[$(F')$]
Let $F' = (f'_{i,j})_{0 \leq i,j \leq r}$
where
$f'_{i,j} = 1$ if
$i=0$ and $j \leq r-1$,
or $i \geq 1$ and $j=r$.
Otherwise let $f'_{i,j}=0$.
Note that ${F'}^{\antiflip} = F'$.
\end{itemize}

Similarly to Lemma~\ref{lemma_relations_M} we have:
\begin{lemma}
The following five relations hold among the
matrices $A$, $A'$, $B$, $F'$ and $J$:
\begin{align}
A'^{2} & = A,
&
{{A'}^{\antiflip}}^{2} & = A^{\antiflip}, 
&
J A' & = J = {A'}^{\antiflip} J . \\
\label{equation_N_F'}
&& A' \cdot B + B \cdot {A'}^{\antiflip} & = 2 \cdot F' .
\end{align}
\label{lemma_relations_N}
\end{lemma}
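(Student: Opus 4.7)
The plan is to verify each of the five relations by direct matrix computation, in the spirit of the proof of Lemma~\ref{lemma_relations_M}, using the anti-flip identities~\eqref{equation_anti_flip} to cut the work roughly in half.

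For the first identity $A'^{2} = A$, I would expand the product entry by entry. Only rows~$0$ and~$1$ of $A'$ are nonzero, so only these rows of $A'^{2}$ can be nonzero. The $(0,j)$-entry of $A'^{2}$ equals $\sum_{k=0}^{r-1} a'_{k,j}$, which picks up contributions from rows~$0$ and~$1$ of $A'$ (note that row~$1$ lies in this range precisely because $r \geq 2$). For $j \leq r-1$ the contribution comes from $a'_{0,j} = 1$, and for $j = r$ from $a'_{1,r} = 1$; in either case the sum is~$1$, giving row~$0$ of~$A$. The $(1,j)$-entry equals $a'_{r,j} = 0$, so row~$1$ of $A'^{2}$ is zero, as desired. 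The identity ${{A'}^{\antiflip}}^{2} = A^{\antiflip}$ then follows immediately by applying the anti-flip operation to $A'^{2} = A$ and invoking~\eqref{equation_anti_flip}.

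For $JA' = J$, observe that each column of $A'$ contains exactly one~$1$: column~$j$ for $j \leq r-1$ has $a'_{0,j} = 1$, and column~$r$ has $a'_{1,r} = 1$. Hence every column sum of $A'$ equals~$1$, so $JA' = J$. The companion identity ${A'}^{\antiflip} J = J$ is then the anti-flip of $JA' = J$, using $J^{\antiflip} = J$.

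The remaining identity $A' \cdot B + B \cdot {A'}^{\antiflip} = 2 \cdot F'$ is the substantive step. The key observation is that $B^{\antiflip} = B$, so $(A'B)^{\antiflip} = B \cdot {A'}^{\antiflip}$; the two summands are thus anti-flip reflections of one another, and hence their sum is automatically anti-diagonally symmetric, matching ${F'}^{\antiflip} = F'$. It therefore suffices to compute the nonzero rows of $A'B$, namely row~$0$ (which is the sum of rows~$0$ through~$r-1$ of~$B$) and row~$1$ (which is simply row~$r$ of~$B$), and then match $A'B + (A'B)^{\antiflip}$ against $2F'$ entry by entry. The main obstacle is the bookkeeping of the boundary entries in columns $r-1$ and~$r$, where both $A'B$ and $B \cdot {A'}^{\antiflip}$ contribute simultaneously; to keep the arithmetic transparent I would split the verification into the cases $j \leq r-2$, $j = r-1$, and $j = r$, and use the anti-diagonal symmetry to reduce the row~$i$ computations for $i \geq 2$ to the previously handled row~$0$ case.
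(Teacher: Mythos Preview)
Your proposal is correct and follows essentially the same direct-computation approach as the paper: the paper also dismisses the first four identities as straightforward and then verifies $A'B + B{A'}^{\antiflip} = 2F'$ by writing out the nonzero rows of $A'B$ as $(2,\ldots,2,1,0)$ and $(0,\ldots,0,1,1)$ and the nonzero columns of $B{A'}^{\antiflip}$ as their anti-flip counterparts. Your use of $(A'B)^{\antiflip} = B{A'}^{\antiflip}$ to obtain the second summand from the first is a tidy shortcut the paper does not make explicit, but the underlying argument is the same.
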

\begin{proof}
The first four relations are straightforward to verify.
Next $A' \cdot B$ has 
the first row to be $(2, \ldots, 2,1,0)$,
the second row to be $(0, \ldots, 0,1,1)$
and the remaining rows to be $0$.
Similarly,
$B \cdot {A'}^{\antiflip}$ has 
the last column to be $(0,1,2, \ldots, 2)$,
the next to last column to be $(1,1,0, \ldots, 0)$
and the previous columns to be $0$.
Adding these two matrix products yields relation~\eqref{equation_N_F'}.
\end{proof}
Let $N = N^{\antiflip}$ be the block matrix
\begin{align*}
N
=
\BMTwoByTwo{A'}{B}{J}{{A'}^{\antiflip}} .
\end{align*}
For $r = 3$ the matrices $N$ and $\Bin(N^{2})$ are as follows
\begin{align*}
N
& =
\left(\begin{array}{c c c c c c c c c}
1 & 1 & 1 & 0 & \VerticalLine & 1 & 0 & 0 & 0 \\
0 & 0 & 0 & 1 & \VerticalLine & 1 & 1 & 0 & 0 \\
0 & 0 & 0 & 0 & \VerticalLine & 0 & 1 & 1 & 0 \\
0 & 0 & 0 & 0 & \VerticalLine & 0 & 0 & 1 & 1 \\ \hline
1 & 1 & 1 & 1 & \VerticalLine & 0 & 0 & 1 & 0 \\
1 & 1 & 1 & 1 & \VerticalLine & 0 & 0 & 0 & 1 \\
1 & 1 & 1 & 1 & \VerticalLine & 0 & 0 & 0 & 1 \\
1 & 1 & 1 & 1 & \VerticalLine & 0 & 0 & 0 & 1
\end{array}\right) ,
&
\Bin(N^{2})
& =
\left(\begin{array}{c c c c c c c c c}
1 & 1 & 1 & 1 & \VerticalLine & 1 & 1 & 1 & 0 \\
1 & 1 & 1 & 1 & \VerticalLine & 0 & 0 & 1 & 1 \\
1 & 1 & 1 & 1 & \VerticalLine & 0 & 0 & 0 & 1 \\
1 & 1 & 1 & 1 & \VerticalLine & 0 & 0 & 0 & 1 \\ \hline
1 & 1 & 1 & 1 & \VerticalLine & 1 & 1 & 1 & 1 \\
1 & 1 & 1 & 1 & \VerticalLine & 1 & 1 & 1 & 1 \\
1 & 1 & 1 & 1 & \VerticalLine & 1 & 1 & 1 & 1 \\
1 & 1 & 1 & 1 & \VerticalLine & 1 & 1 & 1 & 1
\end{array}\right) .
\end{align*}
See Figure~\ref{figure_N_3} for the level poset associated this matrix.
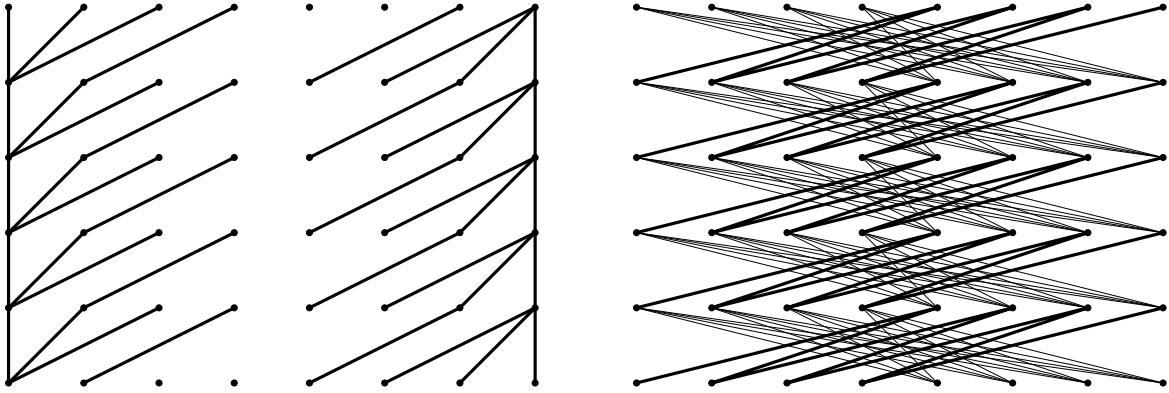
\begin{figure}
\begin{center}
\begin{tikzpicture}
\foreach \x in {0,1,2,3,4,5,6,7} {\foreach \y in {0,1,2,3,4,5}
{\draw[fill] (\x,\y) circle (0.04);};};

\foreach \y in {0,1,2,3,4}
{
\foreach \b in {0,1,2} {\draw[-,very thick] (0,{\y}) -- (\b,{\y+1});};
\draw[-,very thick] (1,{\y}) -- (3,{\y+1});
\foreach \a in {5,6,7} {\draw[-,very thick] (\a,{\y}) -- (7,{\y+1});};
\draw[-,very thick] (4,{\y}) -- (6,{\y+1});
};
\end{tikzpicture}
\hspace*{10mm}
\begin{tikzpicture}
\foreach \x in {0,1,2,3,4,5,6,7} {\foreach \y in {0,1,2,3,4,5}
{\draw[fill] (\x,\y) circle (0.04);};};

\foreach \y in {0,1,2,3,4}
{
\foreach \a in {4,5,6,7} {\foreach \b in {0,1,2,3} {\draw[-,very thin] (\a,{\y}) -- (\b,{\y+1});};};
\draw[-,very thick] (0,{\y}) -- (4,{\y+1}) -- (1,{\y}) -- (5,{\y+1}) -- (2,{\y})
                                         -- (6,{\y+1}) -- (3,{\y}) -- (7,{\y+1});
};
\end{tikzpicture}
\end{center}
\caption{First
the level poset associated to the northwest and southeast submatrices
of the matrix $N$.
Second,
the level poset associated to the northeast and southwest submatrices.
Again, the cover relations corresponding to the southwest block $J$ has been drawn
with thin lines.
By superimposing these two level posets
we obtain the level Eulerian posets associated to matrix $N$ for $r=3$.}
\label{figure_N_3}
\end{figure}

\begin{proposition}
For $r \geq 2$ the matrix $N$ is primitive with the exponent $3$ and encodes a level Eulerian poset.
\end{proposition}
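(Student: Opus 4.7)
The plan is to follow the four-step template of the proof of Proposition~\ref{proposition_M}. I will (i) show $N^{2} = 2 \cdot \Bin(N^{2})$ to settle the rank~$2$ Eulerian condition, (ii) show $\Bin(N^{3}) = J$ to get primitivity with exponent $\gamma = 3$, (iii) invoke Proposition~\ref{proposition_last_Eulerian_condition}(a) to handle all ranks $2k$ with $k \geq 3$, and (iv) invoke Proposition~\ref{proposition_last_Eulerian_condition}(b) to handle rank~$4$. Since the Eulerian condition need only be verified in even ranks, this exhausts the cases.

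For step~(i), expand $N^{2}$ in block form using Lemmas~\ref{lemma_relations_M} and~\ref{lemma_relations_N}. The top-left block becomes ${A'}^{2} + BJ = A + (2J - A) = 2J$; the top-right is $A'B + B{A'}^{\antiflip} = 2F'$ by~\eqref{equation_N_F'}; the bottom-left is $JA' + {A'}^{\antiflip}J = J + J = 2J$; and the bottom-right is $JB + {{A'}^{\antiflip}}^{2} = (2J - A^{\antiflip}) + A^{\antiflip} = 2J$. Hence $N^{2} = 2 \cdot \BMTwoByTwo{J}{F'}{J}{J} = 2 \cdot \Bin(N^{2})$. Since $(F')_{0,r} = 0$ we also conclude $\Bin(N^{2}) \neq J$, so the exponent of $N$ is at least~$3$. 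For step~(ii) it suffices to check that every entry of $N \cdot \Bin(N^{2})$ is positive; expanding this product in block form, the $BJ$, $A'F'$, $JF'$, and ${A'}^{\antiflip}J$ contributions jointly fill in every position for $r \geq 2$.

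For step~(iii), Proposition~\ref{proposition_last_Eulerian_condition}(a) with $\gamma = 3$ and $n = 2r + 2$ requires that every row sum and column sum of $W = -N + \Bin(N^{2})$ equal $(-1)^{\gamma-1} \cdot n/2 - 1 = r$. I verify this by splitting the rows into the four ranges row~$0$, rows~$1$ through~$r$, row~$r+1$, and rows~$r+2$ through~$2r+1$, and reading off the row sums of $N$ and of $\Bin(N^{2}) = \BMTwoByTwo{J}{F'}{J}{J}$ directly from the definitions of $A'$, $B$, $J$, ${A'}^{\antiflip}$, and $F'$; in every range the two row sums differ by exactly $r$. The column sums then follow by the antidiagonal symmetries $N^{\antiflip} = N$ and $(F')^{\antiflip} = F'$. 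For step~(iv), the identity
\begin{align*}
(J - \Bin(N^{2}))^{2} = \BMTwoByTwo{0}{J - F'}{0}{0}^{2} = 0
\end{align*}
holds because any $2 \times 2$ block matrix whose only nonzero block lies in the upper right squares to zero, so Proposition~\ref{proposition_last_Eulerian_condition}(b) converts the rank~$6$ condition secured in step~(iii) into the rank~$4$ condition.

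The main obstacle is the per-row bookkeeping in step~(iii): the row sums of $N$ fluctuate substantially across the four ranges (contributing $r+1$ in row~$0$, $3$ in row~$1$, $2$ in rows~$2$ through~$r$, and $r+2$ in rows~$r+1$ through~$2r+1$), and one must check that $\Bin(N^{2})$ contributes exactly the matching compensating amount in each range so that the row sums of $W$ collapse to the single value $r$. Everything else reduces to direct block manipulations using the identities in Lemma~\ref{lemma_relations_N}, closely following the template of Proposition~\ref{proposition_M}.
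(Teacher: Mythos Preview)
Your proof is correct and follows essentially the same four-step template as the paper's proof: the block expansion of $N^{2}$ via Lemmas~\ref{lemma_relations_M} and~\ref{lemma_relations_N}, the check that $\Bin(N^{3})=J$, the row/column-sum verification for Proposition~\ref{proposition_last_Eulerian_condition}(a), and the nilpotency of $J-\Bin(N^{2})$ for Proposition~\ref{proposition_last_Eulerian_condition}(b). You supply more explicit detail in steps~(i) and~(iii) than the paper does, and your extra remark that $(F')_{0,r}=0$ confirms the exponent is exactly~$3$ rather than smaller, a point the paper leaves implicit.
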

\begin{proof}
This proof is similar to the proof of 
Proposition~\ref{proposition_M}
with a few changes.
First note that
\begin{align*}
N^{2} & = \BMTwoByTwo{2J}{2F'}{2J}{2J} = 2 \cdot \Bin(N^{2}),
&
\left(J - \Bin(N^{2})\right)^{2} & = \BMTwoByTwo{0}{J-F'}{0}{0}^{2} = 0 ,
\end{align*}
where the first identity follows
by the relations in Lemmas~\ref{lemma_relations_M} and~\ref{lemma_relations_N}.
Hence the rank $2$ Eulerian condition holds.
Next we have
$\Bin(N^{3}) = J$ so the exponent is $3$.
The row and column sums of
$- N + \Bin(N^{2})$
is
$r = n/2 - 1$ and hence
the Eulerian condition for even ranks greater than $4$ follows.
As noted above, $\left(J - \Bin(N^{2})\right)^{2}$
is the zero matrix and the rank $4$ Eulerian condition holds.
\end{proof}

We now turn to computing the $\cd$-index of the level poset for the matrix $N$.
Define the column vector $\vec{q}$ by the next equation
and we also display the row vector
${\vec{q}\:}^{\antiflip}$:
\begin{align*}
{\vec{q}\:}^{T}
& =
(\cv+(r-1)\dv, \cv+\dv, \underbrace{\cv, \ldots, \cv}_{r-1}, \underbrace{1, \ldots, 1}_{r+1}) , \\
{\vec{q}\:}^{\antiflip}
& =
(\underbrace{1, \ldots, 1}_{r+1}, \underbrace{\cv, \ldots, \cv}_{r-1}, \cv+\dv, \cv+(r-1)\dv) .
\end{align*}

Let $G$ and $X$ be the two following blocks:
\begin{itemize}
\item[$(G)$]
Let $G = (g_{i,j})_{0 \leq i,j \leq r}$
where
$g_{0,r} = r-2$,
$g_{0,r-1} = g_{1,r} = 1$
and
otherwise let $g_{i,j} = 0$.
Observe that $G^{\antiflip} = G$.
\item[$(X)$]
Let $X = (x_{i,j})_{0 \leq i,j \leq r}$
where
$x_{0,r} = r-1$,
$x_{0,r-1} = 1$
and
otherwise let $x_{i,j} = 0$.
\end{itemize}
\begin{lemma}
The following four relations hold among the five matrices $A$, $A'$, $F'$, $G$ and $X$:
\begin{align}
A' \cdot F' & = A' + X,
&
A' \cdot G & = X,
&
A' - A + X & = G = A'^{\antiflip}-A^{\antiflip}+X^{\antiflip}.
\end{align}
\label{lemma_relations_N_2}
\end{lemma}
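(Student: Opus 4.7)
The plan is to verify the four identities by direct computation, exploiting the sparsity of $A'$. Only rows~$0$ and~$1$ of $A'$ are nonzero: row~$0$ equals $(1, 1, \ldots, 1, 0)$ (with ones in columns $0$ through $r-1$) and row~$1$ equals $(0, \ldots, 0, 1)$. Consequently, for any matrix $M$, the product $A' \cdot M$ has only two potentially nonzero rows: row~$0$ will be the sum of rows $0, 1, \ldots, r-1$ of $M$, and row~$1$ will be row~$r$ of $M$. This observation reduces each of the matrix-product identities to a short column-by-column accounting.

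To verify $A' \cdot F' = A' + X$, I would first check that row~$1$ of $A' \cdot F'$ is row~$r$ of $F'$, namely $(0, \ldots, 0, 1)$, matching row~$1$ of $A' + X$. For row~$0$ I would compute the sum of rows $0, \ldots, r-1$ of $F'$ column by column: columns $0$ through $r-2$ each receive a single~$1$ from row~$0$ of $F'$, column~$r-1$ receives contributions of~$1$ from both row~$0$ and row~$1$ of $F'$, and column~$r$ collects the $r-1$ ones from rows $1, \ldots, r-1$, producing the row $(1, \ldots, 1, 2, r-1)$; this matches row~$0$ of $A' + X$.

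The identity $A' \cdot G = X$ follows in the same way: since $G$ has nonzero entries only in rows~$0$ and~$1$, row~$0$ of $A' \cdot G$ equals row~$0$ plus row~$1$ of $G$, giving $(0, \ldots, 0, 1, r-1)$, while row~$1$ of $A' \cdot G$ equals row~$r$ of $G$, which vanishes; this reproduces $X$. Next I would verify $A' - A + X = G$ entrywise: $A' - A$ has nonzero entries only at $(0, r) = -1$ and $(1, r) = +1$, so adding $X$ fills in the $(0, r-1)$ and $(0, r)$ positions with $1$ and $r-2$ respectively, reproducing $G$. The final equality $G = {A'}^{\antiflip} - A^{\antiflip} + X^{\antiflip}$ is then immediate by applying the anti-flip operation to $A' - A + X = G$, invoking linearity from~\eqref{equation_anti_flip} together with $G^{\antiflip} = G$.

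Every calculation is routine; the only thing demanding care is the bookkeeping in the cluster of entries near positions $(0, r-1)$, $(0, r)$, $(1, r-1)$ and $(1, r)$, where contributions from several rows of $F'$ or $G$ overlap. The matrix $X$ is engineered precisely to absorb these overlaps so that the three product and difference relations close up into the clean form stated in the lemma, which will then be used in the next step of the $\cd$-series computation.
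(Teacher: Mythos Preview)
Your direct entry-by-entry verification is correct and is exactly the approach the paper intends; the paper itself omits the proof as ``straightforward to verify.'' Your careful bookkeeping at columns $r-1$ and $r$ (in particular using $f'_{1,r-1}=1$, consistent with the displayed $\Bin(N^{2})$ and with the computation in the proof of Lemma~\ref{lemma_relations_N}) is the only place any real care is needed, and you handle it correctly.
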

This lemma is straightforward to verify and hence the proof is omitted.

Finally let $D = D^{\antiflip}$ be the block matrix
\begin{align*}
D
=
\BMTwoByTwo{A'}{B}{0}{A'^{\antiflip}}
+
\BMTwoByTwo{0}{F'}{0}{0} \cdot \cv
+
\BMTwoByTwo{0}{G}{0}{0} \cdot \dv .
\end{align*}
\begin{proposition}
For $r \geq 2$ the $\cd$-index series for the level Eulerian poset associated to the matrix~$N$ is given by
\begin{align*}
\Psi_{N} 
=
\vec{q} \cdot \phi \cdot {\vec{q}\:}^{\antiflip} + D .
\end{align*}
\end{proposition}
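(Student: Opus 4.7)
The plan is to apply Theorem~\ref{theorem_unique_solution} by verifying that the proposed series $\vec{q}\cdot\phi\cdot\vec{q}^{\antiflip} + D$ satisfies the two equations characterizing $\Psi_{N}$. The initial condition $\Psi_{N}|_{\cv=t,\dv=0} = K_{N}(t)$ reduces to a direct block calculation. Setting $\dv = 0$ collapses every entry of $\vec{q}$ to either $\cv$ or $1$, so $\vec{q}|_{\cv=t,\dv=0}$ is the column vector with $r+1$ copies of $t$ on top and $r+1$ copies of $1$ on the bottom. A block multiplication yields
\begin{align*}
\vec{q}\cdot\phi\cdot\vec{q}^{\antiflip}\vrule_{\: \cv=t,\dv=0}
= \frac{1}{1-t}\BMTwoByTwo{tJ}{t^{2}J}{J}{tJ},
\end{align*}
and adding the simplified $D|_{\cv=t,\dv=0} = \BMTwoByTwo{A'}{B + tF'}{0}{A'^{\antiflip}}$ reproduces $K_{N}(t) = N + \Bin(N^{2})\cdot t + J \cdot t^{2}/(1-t)$ after collecting the identities $tJ + t^{2}J/(1-t) = tJ/(1-t)$ and $J + tJ + t^{2}J/(1-t) = J/(1-t)$ in each block.

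The core step is the coproduct identity $\Delta(\Psi_{N}) = \Psi_{N} \cdot \tv \cdot \Psi_{N}$. I would expand the right-hand side by bilinearity into four pieces:
\begin{align*}
\vec{q}\phi\vec{q}^{\antiflip} \tv \vec{q}\phi\vec{q}^{\antiflip} \;+\; D \tv \vec{q}\phi\vec{q}^{\antiflip} \;+\; \vec{q}\phi\vec{q}^{\antiflip} \tv D \;+\; D \tv D,
\end{align*}
and expand the left-hand side by the Leibniz rule together with $\Delta(\phi) = \phi \cdot \Delta(\cv + r\dv) \cdot \phi$ from~\eqref{equation_Delta_on_geometric_series}. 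Following the template of Section~\ref{section_The_first_class}, this reduces the check to computing four matrix-level expressions: the scalar $\vec{q}^{\antiflip}\cdot\tv\cdot\vec{q}$ (which is sandwiched between two $\phi$'s), the column vectors $D\cdot\tv\cdot\vec{q}$ and $\vec{q}^{\antiflip}\cdot\tv\cdot D$, and the matrix $D\cdot\tv\cdot D$, then matching their total against $\Delta(\vec{q})\phi\vec{q}^{\antiflip} + \vec{q}\phi\Delta(\cv+r\dv)\phi\vec{q}^{\antiflip} + \vec{q}\phi\Delta(\vec{q}^{\antiflip}) + \Delta(D)$. The self anti-flip symmetries $D = D^{\antiflip}$, $F' = F'^{\antiflip}$, $G = G^{\antiflip}$, together with~\eqref{equation_anti_flip}, let one deduce the $\vec{q}^{\antiflip}\cdot\tv\cdot D$ identity from the $D\cdot\tv\cdot\vec{q}$ one, halving the bookkeeping.

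I expect the main obstacle to be tracking how the four pieces fit together. Unlike Section~\ref{section_The_first_class}, the vector $\vec{q}$ has entries of $\cd$-degree as large as $2$, so a direct expansion shows that $\vec{q}^{\antiflip}\cdot\tv\cdot\vec{q}$ acquires $\tv\dv$ and $\dv\tv$ contributions in addition to the terms of $\Delta(\cv + r\dv)$, and $D\cdot\tv\cdot D$ is no longer zero because of the $F'\cv$ and $G\dv$ summands of $D$. Consequently one cannot match the four products to $\Delta(\vec{q})\phi\vec{q}^{\antiflip}$, $\vec{q}\phi\Delta(\cv+r\dv)\phi\vec{q}^{\antiflip}$, $\vec{q}\phi\Delta(\vec{q}^{\antiflip})$ and $\Delta(D)$ one-at-a-time; instead, one must verify that the surplus $\cd$-terms from the four products cancel globally once the $\phi$-sandwiches are taken into account. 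The relations $A'\cdot F' = A' + X$, $A'\cdot G = X$, and $A' - A + X = G$ from Lemma~\ref{lemma_relations_N_2}, together with the identities of Lemmas~\ref{lemma_relations_M} and~\ref{lemma_relations_N}, are the precise tools needed to align the $F'$- and $G$-contributions of $D$ against $\Delta(\vec{q})$, $\Delta(\vec{q}^{\antiflip})$ and $\Delta(D)$ so that all cross-terms cancel and one is left with $\Delta(\Psi_{N})$.
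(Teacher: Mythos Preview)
Your proposal is correct and follows essentially the same route as the paper: verify the initial condition by a block computation, then expand $\Psi_{N}\cdot\tv\cdot\Psi_{N}$ into the four cross-terms, use the anti-flip symmetry to reduce the mixed terms to one computation, and invoke Lemmas~\ref{lemma_relations_N} and~\ref{lemma_relations_N_2} to force the surplus pieces to assemble into $\Delta(D)$. The paper organizes the cancellation slightly differently---it isolates $\phi\cdot\vec{q}^{\antiflip}\tv\vec{q}\cdot\phi = \Delta(\phi)-\tv\phi-\phi\tv$ first, then shows the stray $\pm\vec{q}\tv\phi\vec{q}^{\antiflip}$ and $\pm\vec{q}\phi\tv\vec{q}^{\antiflip}$ terms are exactly cancelled by what $\vec{q}^{\antiflip}\tv D$ and $D\tv\vec{q}$ contribute beyond $\Delta(\vec{q}^{\antiflip})$ and $\Delta(\vec{q})$---but this is the same global cancellation you anticipate.
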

\begin{proof}
Again the first condition of Theorem~\ref{theorem_unique_solution}
is straightforward to check:
\begin{align*}
\Psi_{N}\vrule_{\: \cv=t,\dv=0}
& =
{\vec{q}\:\vrule_{\: \cv=t, \dv=0}} \cdot \frac{1}{1-t} \cdot {{\vec{q}\:}^{\antiflip}\:\vrule_{\: \cv=t, \dv=0}}
+
{D\:\vrule_{\: \cv=t, \dv=0}} \\
& =
\BMTwoByTwo{tJ}{t^{2}J}{J}{tJ} \cdot \frac{1}{1-t} 
+
\BMTwoByTwo{A'}{B + tF'}{0}{A'^{\antiflip}}
=
K_{N}(t) .
\end{align*}
Next we develop identities that we use in showing the second condition.
Note that
\begin{align*}
{\vec{q}\:}^{\antiflip} \cdot \tv \cdot \vec{q}
& =
r\cv\tv 
+ 
\tv r\cv
+ 
(\cv+r\dv) \tv 
+ 
\tv (\cv+r\dv)  .
\end{align*}
Now using that $\phi \cdot (\cv+r\dv) = \phi - 1 = (\cv+r\dv) \cdot \phi$
and $\Delta(\phi) = \phi \cdot \Delta(\cv + r \dv) \cdot \phi$
we have
\begin{align*}
\phi \cdot {\vec{q}\:}^{\antiflip} \cdot \tv \cdot \vec{q} \cdot \phi
& =
\phi \cdot (r\cv\tv + \tv r\cv) \cdot \phi
+ 
\phi \cdot (\cv+r\dv) \cdot \tv \cdot \phi
+ 
\phi \cdot \tv \cdot (\cv+r\dv) \cdot \phi \\
& =
\phi \cdot \Delta(r\dv) \cdot \phi
+ 
(\phi-1) \cdot \tv \cdot \phi
+ 
\phi \cdot \tv \cdot (\phi-1) \\
& =
\phi \cdot \Delta(\cv+r\dv) \cdot \phi
- 
\tv \cdot \phi
- 
\phi \cdot \tv \\
& =
\Delta(\phi)
- 
\tv \cdot \phi
- 
\phi \cdot \tv ,
\end{align*}
where the last step is equation~\eqref{equation_Delta_on_geometric_series}.
Left multiply with $\vec{q}$ and right multiply with~${\vec{q}\:}^{\antiflip}$ to obtain
\begin{align}
\label{equation_N_1}
\vec{q} \cdot \phi \cdot {\vec{q}\:}^{\antiflip} \cdot \tv \cdot \vec{q} \cdot \phi \cdot {\vec{q}\:}^{\antiflip}
& =
\vec{q} \cdot \Delta(\phi) \cdot {\vec{q}\:}^{\antiflip}
- 
\vec{q} \cdot \tv \cdot \phi \cdot {\vec{q}\:}^{\antiflip}
- 
\vec{q} \cdot \phi \cdot \tv \cdot {\vec{q}\:}^{\antiflip} .
\end{align}
Next we calculate
\begin{align*}
&
{\vec{q}\:}^{\antiflip} \cdot \tv \cdot D
-
\Delta({\vec{q}\:}^{\antiflip})
-
\tv \cdot {\vec{q}\:}^{\antiflip} \\
& =
{\vec{q}\:}^{\antiflip} \cdot \BMTwoByTwo{A'}{B}{0}{A'^{\antiflip}} \cdot  \tv
+
{\vec{q}\:}^{\antiflip} \cdot \BMTwoByTwo{0}{F'}{0}{0} \cdot  \tv \cv
+
{\vec{q}\:}^{\antiflip} \cdot \BMTwoByTwo{0}{G}{0}{0} \cdot \tv \dv
-
\Delta({\vec{q}\:}^{\antiflip})
-
\tv \cdot {\vec{q}\:}^{\antiflip} \\
& =
(\underbrace{1, \ldots, 1}_{r+1}, \underbrace{2, \ldots, 2}_{r-1}, 2+\cv, 1+r\cv+r\dv) \cdot \tv \\
& +
(\underbrace{0, \ldots, 0}_{r+1}, \underbrace{1, \ldots, 1}_{r-1}, 2, r) \cdot \tv\cv \\
& +
(\underbrace{0, \ldots, 0}_{r+1}, \underbrace{0, \ldots, 0}_{r-1}, 1, r-1) \cdot \tv\dv \\
& -
(\underbrace{0, \ldots, 0}_{r+1}, \underbrace{2\tv, \ldots, 2\tv}_{r-1},
2\tv+\cv\tv+\tv\cv, 2\tv+(r-1)\cv\tv+(r-1)\tv\cv) \\
& -
(\underbrace{\tv, \ldots, \tv}_{r+1}, \underbrace{\tv\cv, \ldots, \tv\cv}_{r-1},
\tv\cv+\tv\dv, \tv\cv+(r-1)\tv\dv) \\
& =
(\underbrace{0, \ldots, 0}_{r+1}, \underbrace{0, \ldots, 0}_{r-1},
0, -\tv + \cv\tv+r\dv\tv) .
\end{align*}
Move the two terms $\Delta({\vec{q}\:}^{\antiflip})$
and $\tv \cdot {\vec{q}\:}^{\antiflip}$
to the left-hand side and left multiply with $\vec{q} \cdot \phi$ yields
\begin{align}
\label{equation_N_2}
\vec{q} \cdot \phi \cdot {\vec{q}\:}^{\antiflip} \cdot \tv \cdot D
& =
\vec{q} \cdot \phi \cdot \Delta({\vec{q}\:}^{\antiflip})
+
\vec{q} \cdot \phi \cdot \tv \cdot {\vec{q}\:}^{\antiflip}
-
\vec{q} \cdot \phi \cdot 
(1 - \cv - r \dv) \cdot \tv \cdot 
(\underbrace{0, \ldots, 0}_{2r+1}, 1) \\
& =
\vec{q} \cdot \phi \cdot \Delta({\vec{q}\:}^{\antiflip})
+
\vec{q} \cdot \phi \cdot \tv \cdot {\vec{q}\:}^{\antiflip}
-
\vec{q} \cdot \tv \cdot 
(\underbrace{0, \ldots, 0}_{2r+1}, 1) 
\nonumber \\
& =
\vec{q} \cdot \phi \cdot \Delta({\vec{q}\:}^{\antiflip})
+
\vec{q} \cdot \phi \cdot \tv \cdot {\vec{q}\:}^{\antiflip}
-
\BMTwoByTwo{0}{A^{\antiflip} \cdot \cv\tv + X^{\antiflip} \cdot \dv\tv}{0}{A^{\antiflip} \cdot \tv} ,
\nonumber
\end{align}
where we used $\phi \cdot (1 - \cv - r \dv) = 1$.
Applying the anti-diagonal flip, that is, $M \longmapsto M^{\antiflip}$, yields
\begin{align}
\label{equation_N_3}
D \cdot \tv \cdot \vec{q} \cdot \phi \cdot {\vec{q}\:}^{\antiflip}
& =
\Delta(\vec{q}) \cdot \phi \cdot {\vec{q}\:}^{\antiflip}
+
\vec{q} \cdot \tv \cdot \phi \cdot {\vec{q}\:}^{\antiflip}
-
\BMTwoByTwo{A \cdot \tv}{A \cdot \tv\cv + X \cdot \tv\dv}{0}{0} .
\end{align}
Finally we expand $D \cdot \tv \cdot D$:
\begin{align}
\label{equation_N_4}
D \cdot \tv \cdot D
& =
\BMTwoByTwo{A'}{B}{0}{A'^{\antiflip}}^{2} \cdot \tv
+
\BMTwoByTwo{A'}{B}{0}{A'^{\antiflip}}
\cdot
\BMTwoByTwo{0}{F'}{0}{0} \cdot \tv\cv
+
\BMTwoByTwo{A'}{B}{0}{A'^{\antiflip}}
\cdot
\BMTwoByTwo{0}{G}{0}{0} \cdot \tv \dv \\
&
+
\BMTwoByTwo{0}{F'}{0}{0}
\cdot
\BMTwoByTwo{A'}{B}{0}{A'^{\antiflip}} \cdot \cv\tv
+
\BMTwoByTwo{0}{G}{0}{0}
\cdot
\BMTwoByTwo{A'}{B}{0}{A'^{\antiflip}} \cdot \dv \tv 
\nonumber \\
& =
\BMTwoByTwo{A}{2F'}{0}{A^{\antiflip}} \cdot \tv
+
\BMTwoByTwo{0}{A'+X}{0}{0} \cdot \tv\cv
+
\BMTwoByTwo{0}{X}{0}{0} \cdot \tv \dv 
\nonumber \\
&
+
\BMTwoByTwo{0}{A'^{\antiflip}+X^{\antiflip}}{0}{0} \cdot \cv\tv
+
\BMTwoByTwo{0}{X^{\antiflip}}{0}{0} \cdot \dv \tv ,
\nonumber
\end{align}
where relations from Lemmas~\ref{lemma_relations_N} 
and~\ref{lemma_relations_N_2}
were used.
Finally adding the four identities~\eqref{equation_N_1}
through~\eqref{equation_N_4} yields
\begin{align*}
\Psi_{N} \cdot \tv \cdot \Psi_{N}
& =
\left( \vec{q} \cdot \phi \cdot {\vec{q}\:}^{\antiflip} + D \right)
\cdot \tv \cdot
\left( \vec{q} \cdot \phi \cdot {\vec{q}\:}^{\antiflip} + D \right) \\
& =
\vec{q} \cdot \phi \cdot {\vec{q}\:}^{\antiflip}
\cdot \tv \cdot
\vec{q} \cdot \phi \cdot {\vec{q}\:}^{\antiflip}
+
D
\cdot \tv \cdot
\vec{q} \cdot \phi \cdot {\vec{q}\:}^{\antiflip}
+
\vec{q} \cdot \phi \cdot {\vec{q}\:}^{\antiflip}
\cdot \tv \cdot
D
+
D
\cdot \tv \cdot
D \\
& =
\vec{q} \cdot \Delta(\phi) \cdot {\vec{q}\:}^{\antiflip}
+
\vec{q} \cdot \phi \cdot \Delta({\vec{q}\:}^{\antiflip})
+
\Delta(\vec{q}) \cdot \phi \cdot {\vec{q}\:}^{\antiflip} \\
&
+
\BMTwoByTwo{0}{F'}{0}{0} \cdot 2\tv
+
\BMTwoByTwo{0}{A'-A+X}{0}{0} \cdot \tv\cv
+
\BMTwoByTwo{0}{A'^{\antiflip}-A^{\antiflip}+X^{\antiflip}}{0}{0} \cdot \cv\tv \\
& =
\Delta(\vec{q} \cdot \phi \cdot {\vec{q}\:}^{\antiflip})
+
\BMTwoByTwo{0}{F'}{0}{0} \cdot \Delta(\cv)
+
\BMTwoByTwo{0}{G}{0}{0} \cdot \Delta(\dv) \\
& =
\Delta(\vec{q} \cdot \phi \cdot {\vec{q}\:}^{\antiflip} + D) = \Delta(\Psi_{N}) .
\end{align*}
Hence $\Psi_{N}$ satisfies both equations of Theorem~\ref{theorem_unique_solution}.
\end{proof}

Similarly to Corollary~\ref{corollary_M} we have the next statement.
\begin{corollary}
Let $i$ belong to the interval $\{r+1,r+2, \ldots, 2r+1\}$
and $j$ to the interval $\{0,1, \ldots, r\}$.
For $r \geq 2$ the $\cd$-index of the interval 
$[(i,0), (j,k+1)]$ in the level poset associated to matrix $N$
is the sum
\begin{align*}
\sum_{w} r^{\text{number of $\dv$'s in $w$}} \cdot w
\end{align*}
where $w$ ranges over all $\cd$-monomials of degree $k$.
\label{corollary_N}
\end{corollary}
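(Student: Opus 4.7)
The plan is to read off the $(i,j)$ entry of $\Psi_{N}$ directly from the formula $\Psi_{N}=\vec{q}\cdot\phi\cdot{\vec{q}\:}^{\antiflip}+D$ established in the previous proposition, and then take its degree~$k$ homogeneous part.

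First I would observe that for $i\in\{r+1,\ldots,2r+1\}$ and $j\in\{0,1,\ldots,r\}$ the entry $(i,j)$ lies in the southwest block. Inspecting the definition of $D$, the southwest block is zero in each of its three summands, so $D_{i,j}=0$ for these indices. Next, I would check that the $i$-th coordinate of $\vec{q}$ equals $1$ (these are exactly the last $r+1$ coordinates) and the $j$-th coordinate of ${\vec{q}\:}^{\antiflip}$ also equals $1$ (these are the first $r+1$ coordinates). Therefore
\begin{align*}
(\Psi_{N})_{i,j}
=(\vec{q})_{i}\cdot\phi\cdot({\vec{q}\:}^{\antiflip})_{j}+D_{i,j}
=1\cdot\phi\cdot 1+0=\phi.
\end{align*}

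Finally I would invoke the expansion~\eqref{equation_phi}, namely $\phi=\sum_{w}r^{\#\dv\text{'s in }w}\cdot w$ summed over all $\cd$-monomials~$w$. Since the $\cd$-index of the rank $k+1$ interval $[(i,0),(j,k+1)]$ is the homogeneous component of $(\Psi_{N})_{i,j}$ of degree $k$, extracting this component yields
\begin{align*}
\Psi([(i,0),(j,k+1)])=\sum_{w}r^{\#\dv\text{'s in }w}\cdot w,
\end{align*}
where the sum ranges over $\cd$-monomials $w$ of degree $k$, as claimed.

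There is essentially no obstacle here; the only thing to verify carefully is the block-position bookkeeping — that the index ranges force the $D$ contribution to vanish and force both vectors to contribute the scalar $1$ — which is immediate from the definitions of $\vec{q}$, ${\vec{q}\:}^{\antiflip}$ and $D$.
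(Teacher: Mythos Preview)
Your proposal is correct and follows exactly the approach the paper has in mind: the paper states this corollary without proof, pointing to the analogous Corollary~\ref{corollary_M} whose one-line justification is ``The result follows from the expansion~\eqref{equation_phi}.'' Your argument---checking that the southwest block of $D$ vanishes and that the relevant coordinates of $\vec{q}$ and ${\vec{q}\:}^{\antiflip}$ are $1$, so that $(\Psi_{N})_{i,j}=\phi$---is precisely the intended unpacking of that sentence.
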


\section{Sphericity of the first class}

We now state Theorem~6.7 of \cite{Ehrenborg_Hetyei_Readdy}
that gives an algebraic way to verify that level posets are shellable.
We begin by defining the associated {\em algebra of walks}.
For a $0,1$-matrix $M = (m_{i,j})_{1 \leq i,j \leq n}$
the algebra of walks $\Qqq\langle M \rangle$
is the quotient of the free non-commutative algebra over $\Qqq$
generated by the set of variables
$\{x_{i,j} : m_{i,j} = 1\}$
quoted out by the ideal generated
by the elements $x_{i,j} \cdot x_{i',j'}$ where $j \neq i'$.
That is, a linear basis for this algebra is naturally indexed by
all the walks in the digraph associated to the matrix $M$.
In fact, we introduce the shorthand notation
$x_{i_{1},i_{2}, \ldots, i_{k}}$ to denoted the product
$x_{i_{1},i_{2}} \cdot x_{i_{2},i_{3}} \cdots x_{i_{k-1},i_{k}}$.

Let $I_{<}$ be the ideal in $\Qqq\langle M \rangle$
generated by all the monomials
$x_{i,j,k} = x_{i,j} \cdot x_{j,k}$
such that
there exists $j' < j$ such that $m_{i,j'} = m_{j',k} = 1$.
Observe that now we use the natural order on the indices $1,2, \ldots, n$ of matrix $M$.
Let $Z_{M} = (z_{i,j})_{1 \leq i,j \leq n}$ be the matrix where
$z_{i,j} = x_{i,j}$ if $m_{i,j} = 1$ and $z_{i,j} = 0$ otherwise.
Our interest is to compute powers of this matrix in the quotient ring
$\Qqq\langle M \rangle/I_{<}$.

Another way to describe the monomials occurring in the ideal $I_{<}$ is that
it consists of all degree $2$ monomials that do not
occur in the matrix $Z_{M}^{2}$
when calculated in $\Qqq\langle M \rangle/I_{<}$.
This fact will be useful in explicit calculations.

Now Theorem~6.7 in~\cite{Ehrenborg_Hetyei_Readdy} states:
\begin{theorem}[Ehrenborg--Hetyei--Readdy]
If over the ring $\Qqq\langle M \rangle/I_{<}$ every entry in every power
of the matrix $Z_{M}$ is zero or a single monomial
then $<$ is a vertex shelling order, implying that
every interval in the level poset is shellable.
\label{theorem_shellable}
\end{theorem}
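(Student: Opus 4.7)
The plan is to interpret the algebraic hypothesis as a uniqueness statement about lexicographically rising chains and then apply the Bj\"orner--Wachs theory of EL-shellability. To each saturated chain $(i_{0},0) \prec (i_{1},1) \prec \cdots \prec (i_{k},k)$ in the level poset associate the monomial $x_{i_{0},i_{1},\ldots,i_{k}} \in \Qqq\langle M \rangle$; then the $(i,j)$-entry of $Z_{M}^{k}$, before quotienting, is precisely the sum of these monomials over all saturated chains from $(i,0)$ to $(j,k)$. Passing to the quotient by $I_{<}$ annihilates exactly the monomials containing some consecutive triple $x_{i_{s-1}, i_{s}, i_{s+1}}$ for which a smaller middle vertex $i'_{s} < i_{s}$ also satisfies $m_{i_{s-1}, i'_{s}} = m_{i'_{s}, i_{s+1}} = 1$. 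Consequently a monomial survives modulo $I_{<}$ exactly when each interior vertex of the chain is minimum among its compatible alternatives---that is, the chain is rising under the edge labeling $\lambda\bigl((i,s) \prec (j,s+1)\bigr) = j$.

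With this dictionary established, I would proceed as follows. The hypothesis that every entry of every $Z_{M}^{k}$ in $\Qqq\langle M \rangle / I_{<}$ is zero or a single monomial becomes the statement: each rank-$k$ interval $[(i,0),(j,k)]$ in the level poset contains at most one rising maximal chain. In any non-empty such interval, a rising chain actually exists---constructed greedily by choosing at each step the minimum vertex compatible with the predecessor and still admitting a completion to $(j,k)$---and its associated monomial cannot lie in $I_{<}$, so it must agree with the unique monomial recorded by the entry. Existence and uniqueness together are exactly what is needed for $\lambda$ to be an EL-labeling in the sense of Bj\"orner--Wachs, and by construction the rising chain is lexicographically smallest among all maximal chains in the interval. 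Standard EL-shellability theory then yields that every open interval is shellable and that the vertex order $<$ qualifies as a vertex shelling order, as claimed.

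The principal obstacle will be translating the coarse algebraic hypothesis into the combinatorial shelling axioms without gaps: one must verify that the greedy construction never dead-ends (so a rising chain truly exists whenever the entry of $Z_{M}^{k}$ is nonzero), that the greedy chain is the one witnessed by the surviving monomial in the quotient, and that the resulting EL-labeling produces a shelling in exactly the sense intended by the theorem. An induction on rank $k$, using that passing from $Z_{M}^{k}$ to $Z_{M}^{k+1}$ preserves the ``single monomial per entry'' property in the quotient, should allow the greedy chain at rank $k+1$ to extend the greedy chain at rank $k$ compatibly; this is where the bulk of the bookkeeping lives.
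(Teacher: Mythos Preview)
This theorem is not proved in the paper: it is quoted as Theorem~6.7 of the reference by Ehrenborg, Hetyei, and Readdy and invoked as a black box. There is thus no proof here to compare against.

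On your sketch itself, two comments. First, your dictionary is correct, and the greedy construction does work: if some $j''<i_{s}$ satisfied $m_{i_{s-1},j''}=m_{j'',i_{s+1}}=1$, then $(j'',s)$ would reach $(j,k)$ via $(i_{s+1},s+1)$, so $j''$ would have been a valid and smaller choice at step~$s$, contradicting the greedy minimality of~$i_{s}$. Hence the greedy chain survives modulo~$I_{<}$, and by hypothesis it is the unique survivor in its interval. Second, the identification with EL-shellability under the labeling $\lambda\bigl((i,s)\prec(j,s+1)\bigr)=j$ is not correct as stated: for that labeling ``rising'' means $i_{1}\le i_{2}\le\cdots\le i_{k}$, which is a different condition from the local minimality that characterizes survival in $\Qqq\langle M\rangle/I_{<}$. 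A chain can have every interior vertex minimal relative to its two neighbours while its label sequence is not monotone, so the surviving chains are not the rising chains of~$\lambda$ and the Bj\"orner--Wachs EL theorem does not apply in the form you invoke. What the statement actually asserts is that $<$ is a \emph{vertex shelling order}; the uniqueness of the lexicographically least (greedy) chain in every interval is indeed the combinatorial heart of that, but the passage to a shelling proceeds through the vertex-ordering\,/\,recursive-atom-ordering framework of the cited paper rather than through a direct EL-labeling.
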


Theorem~6.9
in~\cite{Ehrenborg_Hetyei_Readdy}
states that if an Eulerian poset is shellable
then its order complex is homeomorphic to a sphere.
Hence we have the conclusion:
\begin{corollary}[Ehrenborg--Hetyei--Readdy]
If each interval in a level Eulerian poset 
is shellable then the order complex of every interval is homeomorphic to a sphere.
\end{corollary}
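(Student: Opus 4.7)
The plan is to apply Theorem~6.9 of~\cite{Ehrenborg_Hetyei_Readdy} interval by interval. That theorem asserts that the order complex of a shellable Eulerian poset is homeomorphic to a sphere, so it suffices to verify, for each non-singleton interval $[x,y]$ in the level Eulerian poset, that $[x,y]$ is both Eulerian and shellable.

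The shellability of every interval is exactly the hypothesis we are given. The Eulerianness of every non-singleton interval is built into the definition of a level Eulerian poset: the Eulerian condition asserts that every non-singleton interval has the same number of elements of even rank as of odd rank, and since intervals of intervals are themselves intervals in the ambient poset, each $[x,y]$ inherits the Eulerian property. Thus Theorem~6.9 applies to each such $[x,y]$ and yields that its order complex is homeomorphic to a sphere.

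The main obstacle is essentially not present at this step: the corollary is a one-line consequence of combining the hypothesis with the cited theorem. The substantive work lies in Theorem~\ref{theorem_shellable}, which provides a concrete algebraic criterion (powers of $Z_{M}$ in $\Qqq\langle M \rangle/I_{<}$ being monomials) for the shellability hypothesis to be checkable; that is the tool one would use to apply the corollary to the first class of Section~\ref{section_The_first_class}.
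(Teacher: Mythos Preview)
Your proposal is correct and matches the paper's approach exactly: the paper states Theorem~6.9 of~\cite{Ehrenborg_Hetyei_Readdy} and then simply writes ``Hence we have the conclusion,'' giving no further proof. Your observation that each interval of an Eulerian poset is itself Eulerian, together with the shellability hypothesis, is precisely what makes Theorem~6.9 applicable interval by interval.
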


We now use Theorem~\ref{theorem_shellable} to show that all the intervals
in the class of level Eulerian posets from
Section~\ref{section_The_first_class} are shellable.
We begin to show the matrices $Z_{M}$ and $Z_{M}^{2}$
when $r=3$ for this class:
\begin{align*}
Z_{M}
& =
\begin{pmatrix}
x_{0,0} & x_{0,1} & x_{0,2} & x_{0,3} & \VerticalLine & x_{0,4} & 0 & 0 & 0 \\
0 & 0 & 0 & 0 & \VerticalLine & x_{1,4} & x_{1,5} & 0 & 0 \\
0 & 0 & 0 & 0 & \VerticalLine & 0 & x_{2,5} & x_{2,6} & 0 \\
0 & 0 & 0 & 0 & \VerticalLine & 0 & 0 & x_{3,6} & x_{3,7} \\ \hline
x_{4,0} & x_{4,1} & x_{4,2} & x_{4,3} & \VerticalLine & 0 & 0 & 0 & x_{4,7} \\
x_{5,0} & x_{5,1} & x_{5,2} & x_{5,3} & \VerticalLine & 0 & 0 & 0 & x_{5,7} \\
x_{6,0} & x_{6,1} & x_{6,2} & x_{6,3} & \VerticalLine & 0 & 0 & 0 & x_{6,7} \\
x_{7,0} & x_{7,1} & x_{7,2} & x_{7,3} & \VerticalLine & 0 & 0 & 0 & x_{7,7}
\end{pmatrix}
 \\
Z_{M}^{2}
& =
\begin{pmatrix}
x_{0,0,0} & x_{0,0,1} & x_{0,0,2} & x_{0,0,3} & \VerticalLine & x_{0,0,4} & x_{0,1,5} & x_{0,2,6} & x_{0,3,7} \\
x_{1,4,0} & x_{1,4,1} & x_{1,4,2} & x_{1,4,3} & \VerticalLine & 0 & 0 & 0 & x_{1,4,7} \\
x_{2,5,0} & x_{2,5,1} & x_{2,5,2} & x_{2,5,3} & \VerticalLine & 0 & 0 & 0 & x_{2,5,7} \\
x_{3,6,0} & x_{3,6,1} & x_{3,6,2} & x_{3,6,3} & \VerticalLine & 0 & 0 & 0 & x_{3,6,7} \\ \hline
x_{4,0,0} & x_{4,0,1} & x_{4,0,2} & x_{4,0,3} & \VerticalLine & x_{4,0,4} & x_{4,1,5} & x_{4,2,6} & x_{4,3,7} \\
x_{5,0,0} & x_{5,0,1} & x_{5,0,2} & x_{5,0,3} & \VerticalLine & x_{5,0,4} & x_{5,1,5} & x_{5,2,6} & x_{5,3,7} \\
x_{6,0,0} & x_{6,0,1} & x_{6,0,2} & x_{6,0,3} & \VerticalLine & x_{6,0,4} & x_{6,1,5} & x_{6,2,6} & x_{6,3,7} \\
x_{7,0,0} & x_{7,0,1} & x_{7,0,2} & x_{7,0,3} & \VerticalLine & x_{7,0,4} & x_{7,1,5} & x_{7,2,6} & x_{7,3,7}
\end{pmatrix}
\end{align*}
The next proposition is the full evaluation of the matrix $Z_{M}^{p}$.
We use the notation
$0^{p}$ to denote a string of $p$ zeroes, that is, 
$0^{p} = \underbrace{0,0, \ldots,0}_{p}$.
\begin{proposition}
Let $M$ be the level Eulerian poset from
Section~\ref{section_The_first_class}
and let the power $p$ be at least $2$. 
The entries of the southwest, southeast and northwest blocks of the matrix $Z_{M}^{p}$
evaluated in the ring $\Qqq\langle M \rangle/I_{<}$
are given by
\begin{align*}
(Z_{M}^{p})_{i,j} 
& =
\begin{cases}
x_{i,0^{p-1},j}
& \text{ if } r+1 \leq i \leq 2r+1, 0 \leq j \leq r, \\
x_{i,0^{p-2},j-r-1,j}
& \text{ if } r+1 \leq i,j \leq 2r+1, \\
x_{0^{p},j}
& \text{ if } i=0, 0 \leq j \leq r, \\
x_{i,i+r+1,0^{p-2},j}
& \text{ if } i \neq 0, 0 \leq i,j \leq r. \\
\end{cases}
\end{align*}
For the northeast block, that is, $0 \leq i \leq r$ and $r+1 \leq j \leq 2r+1$,
 the entries are described by
\begin{align*}
(Z_{M}^{p})_{i,j} 
& =
\begin{cases}
x_{0^{p-1},j-r-1,j}
& \text{ if } i=0, r+1 \leq j \leq 2r+1, \\
0
& \text{ if } p=2, 1 \leq i \leq r, r+1 \leq j \leq 2r, \\
x_{i,i+r+1,2r+1}
& \text{ if } p=2, 1 \leq i \leq r, j = 2r+1, \\
x_{i, i+r+1,0^{p-3},j-r-1,j}
& \text{ if } p \geq 3, 1 \leq i \leq r, r+1 \leq j \leq 2r+1. \\
\end{cases}
\end{align*}
\label{proposition_M_is_shellable}
\end{proposition}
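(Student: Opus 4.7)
The plan is to proceed by induction on $p \geq 2$. For the base case $p = 2$, the entry $(Z_M^2)_{i,j}$ is a priori the sum $\sum_{k} x_{i,k,j}$ over all middle vertices $k$ with $m_{i,k} = m_{k,j} = 1$, and in the quotient $\Qqq\langle M \rangle / I_{<}$ only the monomial with the smallest such $k$ survives (the rest are exactly the generators of $I_{<}$). A block-by-block enumeration of these smallest middle vertices reproduces the $p = 2$ cases of the formula; the most delicate sub-case is $i \in \{1,\ldots,r\}$, where the only two first-step candidates $i+r$ and $i+r+1$ both lie in the southern half and a direct comparison resolves the winner.

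For the inductive step from $p$ to $p+1$, I use the factorization
\begin{align*}
(Z_M^{p+1})_{i,j} = \sum_{k \,:\, m_{i,k}=1} x_{i,k} \cdot (Z_M^p)_{k,j}
\end{align*}
in $\Qqq\langle M \rangle / I_{<}$. By the induction hypothesis each factor $(Z_M^p)_{k,j}$ is either zero or a single monomial with a definite second letter $\ell(k)$. The product $x_{i,k} \cdot (Z_M^p)_{k,j}$ vanishes in the quotient exactly when the opening triple $x_{i,k,\ell(k)}$ is a generator of $I_{<}$, i.e., when some $k' < k$ satisfies $m_{i,k'} = m_{k',\ell(k)} = 1$. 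So the task reduces to identifying, for each target entry $(i,j)$, the unique surviving $k$ and then verifying that the concatenation of $x_{i,k}$ with the IH-monomial coincides with the string claimed by the formula for $(Z_M^{p+1})_{i,j}$.

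The structural observation organizing all these checks is that vertex $0$ is both minimal and broadly reachable: for every fast vertex $i \in \{0\} \cup \{r+1,\ldots,2r+1\}$ one has $m_{i,0} = 1$, and from $0$ every index in $\{0,1,\ldots,r,r+1\}$ is reachable in a single step. Consequently, when $i$ is fast the surviving $k$ is $k = 0$, and the walks drop immediately to $0$, loop there as long as possible, and then approach $j$ through the smallest legitimate predecessor (which is $j - r - 1$ when $j$ sits in the northeast block). When $i \in \{1,\ldots,r\}$ instead, no direct edge to $0$ exists, so the walk is forced to take one auxiliary step into $\{r+1,\ldots,2r+1\}$ before the $0$-loop begins. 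The main obstacle will be the case-splitting bookkeeping: each of the four block-combinations for $(i,j)$, with further sub-splits for $p = 2$ versus $p \geq 3$ in the northeast block, must be handled separately, and within each case every non-minimal $k$ must be eliminated by exhibiting a suitable smaller alternative $k'$; once these verifications are complete, the concatenation rule yields exactly the monomial asserted by the proposition.
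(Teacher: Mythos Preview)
Your approach is correct and follows the same inductive skeleton as the paper, but with one tactical difference worth noting: you factor $Z_M^{p+1} = Z_M \cdot Z_M^{p}$ and analyze the \emph{first} step of each walk, whereas the paper factors $Z_M^{p+1} = Z_M^{p} \cdot Z_M$ and analyzes the \emph{last} step. In both versions the crux is the same observation --- at the join, only one new length-two triple fails to lie in $I_{<}$, so exactly one monomial survives --- and both require a block-by-block case split of comparable length (the paper spells out six cases with subcases). Your left-multiplication framing does buy one organizing principle the paper does not state explicitly: the surviving first step is always the smallest out-neighbor of $i$ (namely $0$ for $i\in\{0\}\cup\{r+1,\ldots,2r+1\}$ and $i+r$ for $i\in\{1,\ldots,r\}$), which makes the ``funnel through vertex $0$'' picture you describe literally visible in the recursion. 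One small caution: the second letter you call $\ell(k)$ is not a function of $k$ alone --- for $k\ge r+1$ and $p=2$ it depends on $j$ --- so when you carry out the eliminations you cannot always use $k'=0$ as the witnessing smaller middle vertex (for $k\in\{2,\ldots,r\}$ you will need $k'=k-1$ instead); but this is bookkeeping, not a gap in the strategy.
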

\begin{proof}
The proof is by induction on the power $p$.
The induction basis $p=2$ is straightforward.
Assume now that the result holds for $p \geq 2$
and we prove it for $p+1$.
There are six cases to consider.
Note that in each calculation the two first steps takes place
in $\Qqq\langle M \rangle$ and the third and final step
in the quotient $\Qqq\langle M \rangle/I_{<}$.
\begin{enumerate}
\item[(i)] $r+1 \leq i \leq 2r+1, 0 \leq j \leq r$:
\begin{align*}
(Z_{M}^{p+1})_{i,j}
& =
\sum_{\substack{\ell = 0 \text{ or} \\ r+2 \leq \ell \leq 2r+1}}
(Z_{M}^{p})_{i,\ell} \cdot (Z_{M})_{\ell,j} \\
& =
x_{i,0^{p-1},0} \cdot x_{0,j}
+
\sum_{\ell = r+2}^{2r+1}
x_{i,0^{p-1},\ell} \cdot x_{\ell,j}
=
x_{i,0^{p},j} .
\end{align*}
\item[(ii)] $r+1 \leq i,j \leq 2r+1$:
First the subcase where $r+1 \leq j \leq 2r$.
\begin{align*}
(Z_{M}^{p+1})_{i,j}
& =
\sum_{\ell = j-r-1}^{j-r}
(Z_{M}^{p})_{i,\ell} \cdot (Z_{M})_{\ell,j} \\
& =
\sum_{\ell = j-r-1}^{j-r}
x_{i,0^{p-1},\ell} \cdot x_{\ell,j}
=
x_{i,0^{p-1},j-r-1,j} .
\end{align*}
Second, the subcase $j=2r+1$.
\begin{align*}
(Z_{M}^{p+1})_{i,2r+1}
& =
\sum_{\ell = r}^{2r+1}
(Z_{M}^{p})_{i,\ell} \cdot (Z_{M})_{\ell,2r+1} \\
& =
x_{i,0^{p-1},r} \cdot x_{r,2r+1}
+
\sum_{\ell = r+1}^{2r+1}
x_{i,0^{p-2},\ell-r-1,\ell} \cdot x_{\ell,2r+1}
=
x_{i,0^{p-1},r,2r+1} .
\end{align*}
\item[(iii)] $i=0, 0 \leq j \leq r$:
\begin{align*}
(Z_{M}^{p+1})_{0,j}
& =
\sum_{\substack{\ell = 0 \text{ or} \\ r+1 \leq \ell \leq 2r+1}}
(Z_{M}^{p})_{0,\ell} \cdot (Z_{M})_{\ell,j} \\
& =
x_{0^{p+1}} \cdot x_{0,j} 
+
\sum_{\ell = r+1}^{2r+1}
x_{0^{p-1},\ell-r-1,\ell} \cdot x_{\ell,j}
=
x_{0^{p+1},j} .
\end{align*}
\item[(iv)] $i \neq 0, 0 \leq i,j \leq r$:
First consider $p=2$:
\begin{align*}
(Z_{M}^{3})_{i,j}
& =
(Z_{M}^{2})_{i,0} \cdot (Z_{M})_{0,j}  + (Z_{M}^{2})_{i,2r+1} \cdot (Z_{M})_{2r+1,j} \\
& =
x_{i,i+r+1,0} \cdot x_{0,j}  + x_{i,i+r+1,2r+1} \cdot x_{2r+1,j}
=
x_{i,i+r+1,0,j} .
\end{align*}
Second consider the subcase $p \geq 3$:
\begin{align*}
(Z_{M}^{p+1})_{i,j}
& =
(Z_{M}^{p})_{i,0} \cdot (Z_{M})_{0,j}  + (Z_{M}^{p})_{i,2r+1} \cdot (Z_{M})_{2r+1,j} \\
& =
x_{i,i+r+1,0^{p-2},0} \cdot x_{0,j}  + x_{i,i+r+1,0^{p-3},r,2r+1} \cdot x_{2r+1,j}
=
x_{i,i+r+1,0^{p-1},j} .
\end{align*}
\item[(v)] $i=0, r+1 \leq j \leq 2r+1$:
First subcase $r+1 \leq j \leq 2r$, that is, $j \neq 2r+1$:
\begin{align*}
(Z_{M}^{p+1})_{0,j}
& =
(Z_{M}^{p})_{0,j-r-1} \cdot (Z_{M})_{j-r-1,j} + (Z_{M}^{p})_{0,j-r} \cdot (Z_{M})_{j-r,j} \\
& =
x_{0^{p},j-r-1} \cdot x_{j-r-1,j} + x_{0^{p},j-r} \cdot x_{j-r,j}
=
x_{0^{p},j-r-1,j} .
\end{align*}
Second subcase, $j = 2r+1$:
\begin{align*}
(Z_{M}^{p+1})_{0,2r+1}
& =
(Z_{M}^{p})_{0,r} \cdot (Z_{M})_{r,2r+1}
+
\sum_{\ell = r+1}^{2r+1} (Z_{M}^{p})_{0,\ell} \cdot (Z_{M})_{\ell,2r+1} \\
& =
x_{0^{p},r} \cdot x_{r,2r+1}
+
\sum_{\ell = r+1}^{2r+1} x_{0^{p-1},\ell-r-1,\ell} \cdot x_{\ell,2r+1}
=
x_{0^{p},r,2r+1} .
\end{align*}
\item[(vi)] $1 \leq i \leq r, r+1 \leq j \leq 2r+1$:
First subcase $r+1 \leq j \leq 2r$:
\begin{align*}
(Z_{M}^{p+1})_{i,j}
& =
(Z_{M}^{p})_{i,j-r-1} \cdot (Z_{M})_{j-r-1,j}
+
(Z_{M}^{p})_{i,j-r} \cdot (Z_{M})_{j-r,j} \\
& =
x_{i,i+r+1,0^{p-2},j-r-1} \cdot x_{j-r-1,j}
+
x_{i,i+r+1,0^{p-2},j-r} \cdot x_{j-r,j} \\
& =
x_{i,i+r+1,0^{p-2},j-r-1,j} .
\end{align*}
Note that for the last conclusion,
one has to consider the two cases $p=2$ and $p \geq 3$ separately. 
The second subcase $j = 2r+1$ and $p=2$:
\begin{align*}
(Z_{M}^{3})_{i,2r+1}
& =
(Z_{M}^{2})_{i,r} \cdot (Z_{M})_{r,2r+1}
+
(Z_{M}^{2})_{i,2r+1} \cdot (Z_{M})_{2r+1,2r+1} \\
& =
x_{i,i+r+1,r} \cdot x_{r,2r+1}
+
x_{i,i+r+1,2r+1} \cdot x_{2r+1,2r+1} \\
& =
x_{i,i+r+1,r,2r+1} .
\end{align*}
Lastly, the third subcase $j = 2r+1$ and $p \geq 3$:
\begin{align*}
(Z_{M}^{p+1})_{i,2r+1}
& =
(Z_{M}^{p})_{i,r} \cdot (Z_{M})_{r,2r+1}
+
\sum_{\ell = r+1}^{2r+1}
(Z_{M}^{p})_{i,\ell} \cdot (Z_{M})_{\ell,2r+1} \\
& =
x_{i,i+r+1,0^{p-2},r} \cdot x_{r,2r+1}
+
\sum_{\ell = r+1}^{2r+1}
x_{i,i+r+1,0^{p-3},\ell-r-1,\ell} \cdot x_{\ell,2r+1} \\
& =
x_{i,i+r+1,0^{p-2},r,2r+1} .
\end{align*}
\end{enumerate}
This argument completes the six cases and their subcases.
\end{proof}

\begin{corollary}
The order complexes of the intervals of the level Eulerian poset in
Section~\ref{section_The_first_class} are homeomorphic to spheres.
\end{corollary}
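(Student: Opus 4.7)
The plan is that this corollary is essentially a one-line deduction from the preceding results, so the proof proposal is short.

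First I would observe that Proposition~\ref{proposition_M_is_shellable} exhibits, for every $p \geq 2$, each entry of $Z_M^p$ (computed in the quotient ring $\Qqq\langle M\rangle/I_<$) as either zero or a single monomial in the generators $x_{i,j}$. The case $p=1$ is immediate from the definition, since every entry of $Z_M$ itself is either $0$ or a single generator $x_{i,j}$, and the case $p=0$ is the identity matrix whose entries are $0$ or $1$. Hence the hypothesis of Theorem~\ref{theorem_shellable} is verified: every entry of every power of $Z_M$ is either zero or a single monomial.

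Next I would invoke Theorem~\ref{theorem_shellable} (the Ehrenborg--Hetyei--Readdy shellability criterion) to conclude that the natural vertex order $1 < 2 < \cdots < 2r+2$ is a vertex shelling order, which in turn implies that every interval in the level poset associated to $M$ is shellable.

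Finally, since Proposition~\ref{proposition_M} already established that this level poset is Eulerian, I would apply the Ehrenborg--Hetyei--Readdy corollary preceding Theorem~\ref{theorem_shellable} (shellable Eulerian intervals have spherical order complexes) to conclude that the order complex of every interval is homeomorphic to a sphere.

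The only possible obstacle is bookkeeping: making sure that \emph{all} intervals $[(i,0),(j,k+1)]$ of the infinite level poset are covered by the shellability conclusion rather than just those captured in Corollary~\ref{corollary_M}. However, Theorem~\ref{theorem_shellable} is a statement about every interval of the level poset, so the conclusion follows uniformly once the hypothesis is checked. Thus the proof is essentially a citation chain, with all genuine work already absorbed into Proposition~\ref{proposition_M_is_shellable}.
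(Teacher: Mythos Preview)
Your proposal is correct and follows exactly the paper's intended (implicit) argument: Proposition~\ref{proposition_M_is_shellable} verifies the hypothesis of Theorem~\ref{theorem_shellable}, so every interval is shellable, and since the poset is Eulerian by Proposition~\ref{proposition_M}, the Ehrenborg--Hetyei--Readdy corollary gives sphericity. Two cosmetic points: the index set is $\{0,1,\ldots,2r+1\}$ rather than $\{1,\ldots,2r+2\}$, and the corollary you invoke comes \emph{after} Theorem~\ref{theorem_shellable} in the paper, not before it.
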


We turn now to the second class of level Eulerian posets,
that is, the class from Section~\ref{section_The_second_class}.
However, in this case this method for proving shellability fails.
We show this negative result by computing the entry
$(Z_{N}^{3})_{1,2r+1}$.
Note that in the associated digraph the node $1$ can only go to the three nodes
$r$, $r+1$ and $r+2$.
Begin to observe
\begin{align*}
(Z_{N}^{2})_{r,2r+1}
& =
x_{r,2r} \cdot x_{2r,2r+1} + x_{r,2r+1} \cdot x_{2r+1,2r+1}
=
x_{r,2r} \cdot x_{2r,2r+1} 
=
x_{r,2r,2r+1} , \\
(Z_{N}^{2})_{r+1,2r+1}
& =
x_{r+1,r} \cdot x_{r,2r+1} + x_{r+1,2r} \cdot x_{2r,2r+1}
=
x_{r+1,r} \cdot x_{r,2r+1}
=
x_{r+1,r,2r+1} , \\
(Z_{N}^{2})_{r+2,2r+1}
& =
x_{r+2,r} \cdot x_{r,2r+1} + x_{r+2,2r+1} \cdot x_{2r+1,2r+1}
=
x_{r+2,r} \cdot x_{r,2r+1}
=
x_{r+2,r,2r+1} .
\end{align*}
Finally, we calculate
\begin{align*}
(Z_{N}^{3})_{1,2r+1}
& =
x_{1,r} \cdot (Z_{N}^{2})_{r,2r+1}
+
x_{1,r+1} \cdot (Z_{N}^{2})_{r+1,2r+1}
+
x_{1,r+2} \cdot (Z_{N}^{2})_{r+2,2r+1} \\
& =
x_{1,r} \cdot x_{r,2r,2r+1}
+
x_{1,r+1} \cdot x_{r+1,r,2r+1}
+
x_{1,r+2} \cdot x_{r+2,r,2r+1} \\
& =
x_{1,r,2r,2r+1}
+
(x_{1,r+1} \cdot x_{r+1,r} + x_{1,r+2} \cdot x_{r+2,r}) \cdot x_{r,2r+1} \\
& =
x_{1,r,2r,2r+1}
+
x_{1,r+1,r,2r+1} .
\end{align*}
Note that this last expression do not reduce any further.
However, we end this section with the natural conjecture.
\begin{conjecture}
The intervals in the level Eulerian poset from
Section~\ref{section_The_second_class}
are shellable and hence their order complexes
are homeomorphic to spheres.
\end{conjecture}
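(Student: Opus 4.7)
The obstruction identified in the paper is local: under the natural order only the entry $(Z_N^{3})_{1,2r+1}$ fails to be a single monomial, and the failure stems from the coexistence of two irreducible length-three walks, $1 \to r \to 2r \to 2r+1$ and $1 \to r+1 \to r \to 2r+1$. My first approach is therefore to look for a different total order on $\{0,1,\ldots,2r+1\}$ under which one of these walks becomes reducible in $I_{<}$. A natural candidate is the transposition that places $r+1$ immediately before $r$: since $m_{1,r+1}=1$ and $m_{r+1,2r}=1$, the sub-monomial $x_{1,r,2r}$ lands in $I_{<}$ and $x_{1,r,2r,2r+1}$ vanishes, leaving a single term in $(Z_N^{3})_{1,2r+1}$. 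One then checks the analogue of Proposition~\ref{proposition_M_is_shellable} by a case analysis of $Z_N^{p}$ in $\Qqq\langle N \rangle/I_{<}$, mirroring the six-case argument used for $M$. If this works, Theorem~\ref{theorem_shellable} yields shellability of every interval and the subsequent corollary delivers sphericity.

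If no vertex shelling order succeeds, my fallback is either a CL-labeling of each interval or a direct poset isomorphism with the first class. For the CL approach, label each cover $(i,s) \prec (j,s+1)$ by a pair drawn from the target row index $j$ and the rank $s+1$, and prove by induction on the chain length $k$ that every subinterval $[(i,0),(j,k+1)]$ admits a unique ascending maximal chain, exploiting the self-similarity of the level poset under rank translation together with the coproduct identities that support the derivation of $\Psi_{N}$. For the isomorphism approach, I would exploit that $N$ and $M$ differ in only four entries and that Corollaries~\ref{corollary_M} and~\ref{corollary_N} assign corresponding intervals the same $\cd$-index, then attempt to construct explicit poset isomorphisms by rerouting the locally-perturbed cover relations; if successful, the sphericity of the first class transfers directly.

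The principal obstacle I anticipate is that the extra branching of maximal chains near the junction indices $r$ and $r+1$ in the second class is precisely what makes vertex shellings fragile. Moreover, the symmetry $N = N^{\antiflip}$ generates a mirror obstruction at the entry $(Z_N^{3})_{0,2r}$, so any reordering that repairs the first obstruction may re-create the second one at the opposite corner. A sensible preliminary step is therefore a computer search over vertex orders for small $r$, say $r = 2, 3$, to determine whether any order succeeds; if none does, a genuinely different technique, for instance a recursive atom ordering in the sense of Bj\"orner--Wachs tailored to the $\cd$-series structure of $\Psi_{N}$, will be required.
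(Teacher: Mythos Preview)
The statement you are attempting to prove is a \emph{conjecture} in the paper, not a theorem: the paper offers no proof whatsoever. In fact, the paragraph immediately preceding the conjecture explicitly demonstrates that the vertex-shelling method of Theorem~\ref{theorem_shellable} fails for the natural order by exhibiting the two-term entry $(Z_{N}^{3})_{1,2r+1} = x_{1,r,2r,2r+1} + x_{1,r+1,r,2r+1}$, and then simply records the expected outcome as an open problem. There is therefore no ``paper's own proof'' against which to compare your attempt.

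What you have written is not a proof but a research outline listing three possible strategies (alternative vertex orders, CL-labelings, poset isomorphisms with the first class), together with the honest acknowledgment that the anti-diagonal symmetry $N = N^{\antiflip}$ may obstruct any simple reordering. None of these strategies is actually executed: you do not verify that the transposition of $r$ and $r+1$ makes \emph{every} entry of \emph{every} power $Z_{N}^{p}$ a single monomial (you only check one entry of one power), you do not produce a CL-labeling, and you do not construct the claimed isomorphism. Each of these would require substantial work of the kind carried out in Proposition~\ref{proposition_M_is_shellable}, and indeed your own final paragraph correctly anticipates that the mirror obstruction at $(Z_{N}^{3})_{0,2r}$ may defeat the first strategy. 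In short, the gap is total: you have a plausible to-do list for attacking an open problem, but no argument that settles it.
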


\section{Concluding remarks}

Are there better ways to compute the $\cd$-index series of level Eulerian posets?
Even though the two classes $M$ and $N$ in this paper are very close, that is,
the Hamming distance between these two matrices is $4$, the $\cd$-index series
verification for the matrix $N$ is a lot longer.

Also note that the classes $M$ and $N$ share many properties, such as
they both have the exponent to be $3$. Also the denominator in their
$\cd$-index series is the polynomial expression $1 - \cv - r \cdot \dv$.
Does the exponent $\gamma$ yield a bound on the degree of the
denominator of the rational expression of the $\cd$-index series?

Instead of the finding a rational expression for the $\cd$-index series $\Psi$ for a
level Eulerian poset, would it be more compact to express the series as a weighted finite
automata? See the book~\cite[Chapter~1]{Berstel_Reutenauer} for more on
the connection between rational series and weighted finite automata.
Given a $0,1$-matrix $M$ indexed by $0 \leq i,j \leq n-1$.
Consider the set of states
$\{s_{0}, s_{1}, \ldots, s_{n-1}, f_{0}, f_{1}, \ldots, f_{n-1}\}$.
We have two types of directed edges
$s_{i} \longrightarrow f_{j}$ and
$f_{i} \longrightarrow f_{j}$
and we weight them according to
\begin{align*}
w(s_{i} \longrightarrow f_{j})
& =
(K_{M}(\av-\bv))_{i,j} ,
&
w(f_{i} \longrightarrow f_{j})
& =
\bv \cdot (K_{M}(\av-\bv))_{i,j} ,
\end{align*}
where $(K_{M}(\av-\bv))_{i,j}$ is the $(i,j)$ entry of
the series $K_{M}$ from equation~\eqref{equation_K}.
Then the $\ab$-series $\Psi_{i,j}$ is the sum of the total weight
of all the paths from the starting state $s_{i}$ to the final state $f_{j}$.
However, even for the first non-trivial example, that is,
when $r=1$ in the first class of level Eulerian posets,
it is a non-trivial task to see that the associated weighted automata
generate a $\cd$-series.

Corollaries~\ref{corollary_M} and~\ref{corollary_N} raises another question:
For which functions $f$ does there exist an Eulerian poset of rank $k+1$ such that
the $\cd$-index of the poset is
\begin{align*}
\sum_{w} f(\text{number of $\dv$'s in $w$}) \cdot w 
\end{align*}
where the sum ranges over all $\cd$-monomials of degree $k$?

There is another class of level Eulerian posets that
we have not touch upon, namely half Eulerian posets.
Half Eulerian posets were developed by Bayer and Hetyei
in the two papers~\cite{Bayer_Hetyei_1,Bayer_Hetyei_2}.
They are so named that after a doubling operation they yield
Eulerian posets. See Example~6.11
in~\cite{Ehrenborg_Hetyei_Readdy} for a level poset that
is half Eulerian.

\section*{Acknowledgments}

The author thanks Margaret Readdy and the two referees for their comments
on an earlier draft of this paper.
This work was partially supported by a grant from the
Simons Foundation
(\#854548 to Richard~Ehrenborg).

\bibliographystyle{plain}
\bibliography{poset.bib}

\end{document}